\newtheorem{thm}{Theorem}[section]
\newtheorem{lma}{Lemma}[section]
\newtheorem{prop}{Proposition}[section]
\newtheorem{cor}{Corollary}[section]
\theoremstyle{definition}
\theoremstyle{remark}
\newtheorem{remark}{Remark}[section]
\numberwithin{equation}{section}
\newcounter{mnotecount}[section]
\newcommand{\R}{\mathbb R}
\newcommand{\be}{\begin{equation}}
\newcommand{\ee}{\end{equation}}
\newcommand{\bee}{\begin{equation*}}
\newcommand{\eee}{\end{equation*}}
\def\p{\partial}
\def\lf{\left}
\def\ri{\right}
\def\Pi{\displaystyle{\mathbb{II}}}
\def\fz{\mathfrak{z}}
\def\H{\mathbb{H}}
\def\a{\alpha}
\def\ii{\sqrt{-1}}
\def\wt{\widetilde }
\def\ol{\overline}
\title[Equation in Relativistic Teichm\"{u}ller Theory]
{On a fully nonlinear equation in Relativistic Teichm\"{u}ller Theory }
\author{Luen-Fai Tam}
\address{Institute of Mathematical Sciences and Department of Mathematics, The Chinese University of Hong Kong}
\email{lftam@math.cuhk.edu.hk}
\thanks{The first author is  partially supported by the Hong Kong RGC General Research Fund \#CUHK 14301517}
\author[Tom Wan]{Tom Yau-heng Wan}
\address{Department of Mathematics, The Chinese University of Hong Kong}
\email{tomwan@cuhk.edu.hk}
\date{\today}
\subjclass{primary  35Q75; secondary   58J05}
\keywords{Einstein Equations,   Teichm\"uller space, TT-tensors}
\begin{document}

\begin{abstract}
We  obtain basic estimates for a Monge-Amp\`{e}re equation introduced by Moncrief in the study of the Relativistic Teichm\"{u}ller Theory.  We then give another proof of the parametrization of the Teichm\"uller space obtained by Moncrief. Our approach provides yet another proof of the classical Teichm\"{u}ller theorem that the Teichm\"uller space of a compact oriented surface of genus $g(\Sigma)>1$ is  diffeomorphic  to the disk of dimension $6g(\Sigma)-6$. We also give another proof of properness of a certain energy function on the Teichm\"uller space.

\end{abstract}

\maketitle
\section{Introduction}\label{sec-intro}

In \cite{Moncrief}, Moncrief studied  solutions of the vacuum Einstein equation  on $\Sigma\times \R$ with CMCSH (constant-mean-curvature-spatially-harmonic) gauge, where $\Sigma$ is a compact Riemann surface of genus $g(\Sigma)>1$ with a fixed metric $\rho$ of constant $-1$ scalar curvature. In CMCSH  gauge, each time slice has constant mean curvature so that with the induced metric $g$ the identity map $\mathbf{Id}: (\Sigma,g)\to (\Sigma,\rho)$ is harmonic. In \cite{Moncrief}, it was shown that such a solution to   the vacuum Einstein equation is globally determined by the solution of the following Monge-Amp\`{e}re equation:
\be\label{e-Lambda-intro}
\Delta_\rho u-u+\lf(1+2\lf|\xi\ri|^2\ri)^\frac12=0
\ee
where
\be\label{e-zeta-1-intro}
2\xi_{ab}=2 \mathfrak{z}_{ab}-\lf(2 u_{;ab}-\rho_{ab}\Delta_\rho u\ri).
\ee
Here $\fz$ is a symmetric traceless and divergence free (0,2) tensor (referred as a TT-tensor) on $(\Sigma,\rho)$, and $u_{;ab}$ is the Hessian of $u$ with respect to $\rho$.   We will refer this equation as the {\it Moncrief's equation} in this work.

 It was shown in \cite{Moncrief} that for $\tau<0$, one can find a unique solution $u(\tau)$ corresponding to $\tau \fz$. Using the solutions, we obtain a family of metrics $g(\tau)$. Solving for the lapse $N$ and shift $X$ by \cite{AnderssonMoncrief}, then
$$
ds^2=-N^2dt^2+g_{ab}(dx^a+X^adt)(dx^b+X^bdt)
$$
with $t=-1/\tau$ is a solution to the vacuum Einstein equation on $\Sigma\times(0,\infty)$.
Since harmonicity is preserved under conformal changes of the domain metrics,  Moncrief \cite{Moncrief} shows that one can parametrize the Teichm\"uller space $\mathcal{T}(\Sigma)$ of $\Sigma$ by the space of TT-tensors on $(\Sigma,\rho)$. In \cite{Moncrief}, this was proved by using the Hamilton-Jacobi theory on the cotangent bundle of $\mathcal{T}(\Sigma)$ which is the natural reduced phase space of the vacuum Einstein equations in the form of Hamiltonian dynamical systems, see \cite{Moncrief,Witten}.

In this article, we will obtain various estimates for Moncrief's equation. In particular, we provide   another proof of the above parametrization. In fact, in the original proof of existence of solutions to the equation, it was mentioned that: ``Ideally, this result should follow from estimates derived directly from the Monge-Amp\`{e}re equation," see \cite[p.238]{Moncrief}. More precisely, we first develop estimates to the solutions of the Moncrief's equation and use them to construct a homeomorphism  $\Psi$ from the space of TT-tensors $\mathcal{K}(\rho)$ on $(\Sigma,\rho)$ to $\mathcal{T}(\Sigma)$.  Unlike the Hamilton-Jacobi approach in \cite{Moncrief}, our approach does not assume the Teichm\"{u}ller theorem that $\mathcal{T}(\Sigma)$ is a disk of dimension $6g(\Sigma)-6$, where $g(\Sigma)>1$ is the genus of $\Sigma$. And hence, it provides yet another proof of the Teichm\"{u}ller theorem.

As mentioned in \cite{Moncrief}, this parametrization is complimentary to the parametrization given by M. Wolf in \cite{Wolf}. Unlike Moncrief's construction, Wolf fixed the conformal structure, equivalently the metric $\rho$, on the domain instead of the target when requiring the identity map being harmonic. Then Wolf showed for each holomorphic quadratic differential, equivalently TT-tensor, on $\Sigma$ with respect to the fixed conformal structure on domain surface corresponds uniquely to a conformal structure of $\Sigma$ as target surface via a canonical metric with constant negative curvature. Furthermore, Wolf was able to show that his parametrization gives a compactification of $\mathcal{T}(\Sigma)$ which is equivalent to the Thurston's compactification. The corresponding compactification problem still remains open for Moncrief's construction. The authors intend to study the limiting behaviors of solutions to the Moncrief's equation in the future in order to have a better understanding of the above problem.

The paper is organized as follows: in section \ref{sec-Basic}, we give some basic properties of the Moncrief's equation; in sections \ref{sec-zero-order}, \ref{sec-seond-order}, \ref{sec-higher-order} we derive estimates in various order; in section \ref{sec-Teichmuller} we apply the estimates to obtain the require parametrization and discuss the properness of an energy function on the Teichm\"uller space.

{\it Acknowledgement}: The first author would like to thank Shing-Tung Yau for his suggestion which is the basis for this work and Zhuobin Liang for many useful discussions. The second author would like to thank Richard Schoen, Robert Kusner, and Sumio Yamada for helpful discussions.

\section{Basic    properties of the Moncrief's equation}\label{sec-Basic}
Let
 $(\Sigma,\rho)$ be an oriented compact surface with constant scalar curvature $-1$ metric $\rho$, and let $\mathcal{K}(\rho)$ be the space of all TT-tensors (trace free, divergence free) on $(\Sigma,\rho)$. It is well-known that TT-tensors are in one to one correspondence with holomorphic quadratic differentials on $(\Sigma,\rho)$.

 {\it In the following we always use $\rho$ to raise and lower indices unless specified otherwise.}

 For any $\mathfrak{z}\in \mathcal{K}(\rho)$, we have $\mathfrak{z}_a^b=\rho^{bc}\mathfrak{z}_{ac}$ and $\mathfrak{z}^{ab}=\rho^{ac}\mathfrak{z}^b_c$. Then the (pointwise) norm  of $\mathfrak{z}$ with respect to $\rho$ is given by
\be
|\mathfrak{z}|^2=\rho^{ac}\rho^{bd}\mathfrak{z}_{ab}\mathfrak{z}_{cd}.
\ee

We consider the following fully nonlinear equation obtained by  Moncrief \cite{Moncrief}:
\be\label{e-Lambda-1}
\Delta_\rho u-u+\lf(1+2\lf|\xi\ri|^2\ri)^\frac12=0
\ee
where
\be\label{e-zeta-1}
2\xi_{ab}=2 \mathfrak{z}_{ab}-\lf(2 u_{;ab}-\rho_{ab}\Delta_\rho u\ri).
\ee
Here $u_{;ab}$ is the Hessian of $u$ respect to $\rho$ and the norm of $\xi$ is with respect to $\rho$.
\begin{remark} Note that $\xi$ given by \eqref{e-zeta-1} is symmetric and traceless with respect to $\rho$, namely
$$
\xi_{ab}=\xi_{ba}, \mbox{ } \xi^{ab}=\xi^{ba}, \mbox{ and } \xi^a_a=0.
$$
\end{remark}

We will denote the non-linear term by
\be\label{e-B-def}
B=\lf(1+2\lf|\xi\ri|^2\ri)^\frac12
\ee
and define a metric $g_{ab}$ so that its inverse $g^{ab}$ is given by
\be\label{e-g-1}
 (1+B)    g^{al}=   -2 \xi^{al}+  B \rho^{al}.
   \ee
The fact that $g$ is a metric follows from the Lemma \ref{l-metric} below. It is easy to see that if the solution $u$ of \eqref{e-Lambda-1} is unique for a given $\mathfrak{z}\in\mathcal{K}(\rho)$ (to be proved in Corollary \ref{c-unique}) then all $u, g, B$ depend only on $\fz$. So if needed to be explicit, we will write $u=u(\fz)$, $g=g(\fz)$ etc. Note that $g_{ab}$ and $g^{ab}$ are not related by raising and lowering the indices using metric $\rho$.

\begin{lma}\label{l-metric} Let $u$ be a smooth solution to \eqref{e-Lambda-1}. The tensor $g_{ab}$ defined by equation \eqref{e-g-1}  is a metric with $\mu_g=\sqrt{\det(g)}=\mu_\rho(1+B)$, where $\mu_\rho=\sqrt{\det(\rho)}$. Moreover,
\bee
\rho_{ab}=\frac{B}{1+B}g_{ab}-\frac{2}{1+B}\xi_a^cg_{cb}.
\eee
\end{lma}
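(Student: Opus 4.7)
The plan is to work pointwise and exploit the fact that the symmetric, $\rho$-traceless tensor $\xi$ can be simultaneously diagonalized with $\rho$. At each point pick a $\rho$-orthonormal frame in which $\xi_b^a=\rho^{ac}\xi_{cb}$ is the diagonal matrix $\diag(\lambda,-\lambda)$. Then $|\xi|^2 = 2\lambda^2$, so by the definition of $B$ in \eqref{e-B-def} we have $B^2 = 1+4\lambda^2$, and in particular $B > 2|\lambda|$.

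Step 1 (metric property). In the frame above, $\rho^{ab}=\delta^{ab}$ and $\xi^{ab}=\diag(\lambda,-\lambda)$, so \eqref{e-g-1} reads
\[
(1+B)g^{ab} \;=\; \diag\bigl(B-2\lambda,\; B+2\lambda\bigr).
\]
Both diagonal entries are strictly positive because $B>2|\lambda|$, and the tensor $g^{ab}$ is clearly symmetric, so it defines a positive-definite symmetric $(2,0)$-tensor. Its inverse $g_{ab}$ is therefore a Riemannian metric.

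Step 2 (volume form). Computing the determinant in the same frame,
\[
\det(g^{ab}) \;=\; \frac{(B-2\lambda)(B+2\lambda)}{(1+B)^2} \;=\; \frac{B^2-4\lambda^2}{(1+B)^2} \;=\; \frac{1}{(1+B)^2},
\]
where we used $B^2-4\lambda^2=1$. Since in a $\rho$-orthonormal frame $\det(\rho_{ab})=1$, this gives $\det(g_{ab})=(1+B)^2$ in this frame, and passing back to arbitrary coordinates we get $\mu_g = \sqrt{\det g} = (1+B)\mu_\rho$. (Alternatively, one writes $g^{ab}=\rho^{ac}\bigl[(B\delta_c^b-2\xi_c^b)/(1+B)\bigr]$ and notes that $\det(B\delta_c^b-2\xi_c^b)=B^2-4\lambda^2=1$ is frame-independent.)

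Step 3 (the identity for $\rho_{ab}$). Contract \eqref{e-g-1} with $g_{lb}$ to obtain
\[
(1+B)\delta^a_{\ b} \;=\; -2\xi^{al}g_{lb} + B\rho^{al}g_{lb}.
\]
Now multiply by $\rho_{ac}$. Using $\rho_{ac}\xi^{al}=\xi_c^{\ l}$ and $\rho_{ac}\rho^{al}=\delta_c^{\ l}$, we get
\[
(1+B)\rho_{cb} \;=\; -2\xi_c^{\ l}g_{lb} + B g_{cb},
\]
which is exactly the stated formula after dividing by $1+B$. No further obstacle appears; the whole lemma rests on the algebraic identity $B^2-4\lambda^2=1$, which makes the volume-form computation collapse and guarantees positivity of $g^{ab}$.
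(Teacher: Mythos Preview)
Your proof is correct and follows essentially the same approach as the paper: diagonalize $\xi$ in a $\rho$-orthonormal frame at a point, use the trace-free condition to reduce to the single eigenvalue $\lambda$ and the identity $B^2-4\lambda^2=1$ for positivity and the volume computation, and then contract \eqref{e-g-1} with $\rho_{\cdot\cdot}$ and $g_{\cdot\cdot}$ to obtain the formula for $\rho_{ab}$. The only cosmetic difference is that the paper contracts in one step with $\rho_{pa}g_{lq}$ rather than first with $g_{lb}$ and then with $\rho_{ac}$.
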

\begin{proof} Since $\xi_{ab}$ is symmetric, we may diagonalize $\xi$ with respect to $\rho$ at a point so that $\rho_{ab}=\delta_{ab}$ and $\xi_{ab}=\a_a\delta_{ab}$, then
$$
(1+B)g^{ab}=(-2\a_a+B)\delta^{ab}.
$$
Note that
$$
B^2-4(\a_1^2+\a_2^2)=1+2|\xi|^2-2|\xi|^2=1,
$$
because $\xi$ is trace free. Therefore $g^{al}$ is positive definite and $g_{ab}$ is a metric.

On the other hand, we also have
$$
(1+B)^2\mu_g^{-2}=B^2-2|\xi|^2=1.
$$
Hence $\mu_g=(1+B) $. Since $\mu_\rho=1$ in this coordinates system,  we have $\mu_g=(1+B)\mu_\rho$.

Finally, to express $\rho$ in terms of $g$, we multiply \eqref{e-g-1} by $\rho_{pa}g_{lq}$ and get
\bee
(1+B)\rho_{pq}=-2\xi_p^ag_{aq}+Bg_{pq}.
\eee
The last result follows.
\end{proof}

\begin{lma}\label{l-harmonic} The identity map $\mathbf{Id}:(\Sigma,g)\to (\Sigma,\rho)$ is harmonic, where $g$ is the metric given in the Lemma \ref{l-metric}.
\end{lma}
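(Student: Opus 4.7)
The plan is to compute the tension field of the identity map directly in local coordinates and show it vanishes, reducing the problem to a first order divergence identity for $\xi$ that will follow from the TT condition on $\fz$ together with the Moncrief equation itself.

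Writing the tension field as
$$\tau^c = g^{ab}\bigl(\Gamma(\rho)^c_{ab} - \Gamma(g)^c_{ab}\bigr)$$
and using the standard identity $g^{ab}\Gamma(g)^c_{ab} = -\mu_g^{-1}\partial_a(\mu_g g^{ac})$, I would express $\tau^c$ in ``divergence plus Christoffel'' form. Lemma \ref{l-metric} gives $\mu_g\, g^{ac} = \mu_\rho(-2\xi^{ac}+B\rho^{ac})$, so the divergence term is expressible purely in $\rho$-quantities. Converting $\mu_\rho^{-1}\partial_a(\mu_\rho T^{ac})$ into $\nabla_a T^{ac} - \Gamma(\rho)^c_{ab} T^{ab}$ for the symmetric tensor $T^{ac}=-2\xi^{ac}+B\rho^{ac}$, one finds that the $\Gamma(\rho)$ terms cancel against $(1+B)g^{ab}\Gamma(\rho)^c_{ab} = T^{ab}\Gamma(\rho)^c_{ab}$, giving the clean formula
$$(1+B)\,\tau^c = -2\nabla_a\xi^{ac} + \nabla^c B,$$
where $\nabla$ denotes the Levi-Civita connection of $\rho$. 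Harmonicity of $\mathbf{Id}$ is thus equivalent to the divergence identity $2\nabla_a\xi^{ac} = \nabla^c B$.

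To establish this identity I would take the $\rho$-divergence of \eqref{e-zeta-1}. The TT assumption on $\fz$ kills the $\fz$-term and leaves
$$2\nabla^a\xi_{ab} = -2\nabla^a u_{;ab} + \nabla_b\Delta_\rho u.$$
Commuting derivatives via the Bochner identity $\nabla^a u_{;ab} = \nabla_b\Delta_\rho u + \Ric_{bc}\nabla^c u$, and using $\Ric = -\tfrac12\rho$ on the surface $(\Sigma,\rho)$ of constant scalar curvature $-1$, this simplifies to $2\nabla^a\xi_{ab} = -\nabla_b(\Delta_\rho u - u)$. The Moncrief equation \eqref{e-Lambda-1} provides $\Delta_\rho u - u = -B$, yielding $2\nabla^a\xi_{ab} = \nabla_b B$, as required.

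Conceptually this is a conservation-law computation: the harmonicity condition is converted into a first-order divergence identity, and the two structural inputs, the TT condition on $\fz$ and the Moncrief equation, are exactly what is needed. No substantive obstacle arises; the main care lies in tracking the factors of $1+B$ and $\mu_\rho$ when rewriting the tension field in $\rho$-covariant form.
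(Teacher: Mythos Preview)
Your proof is correct and follows essentially the same route as the paper's. Both arguments reduce harmonicity to the vanishing of the $\rho$-divergence of $(1+B)g^{ab}=-2\xi^{ab}+B\rho^{ab}$, and both verify this using the TT condition on $\fz$, the Ricci commutation formula on the constant-curvature surface, and the Moncrief equation $\Delta_\rho u-u=-B$; the only difference is cosmetic (you package the tension field via the identity $g^{ab}\Gamma(g)^c_{ab}=-\mu_g^{-1}\partial_a(\mu_g g^{ac})$, while the paper works in $\rho$-normal coordinates to reach the same divergence expression).
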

\begin{proof} Let $\Gamma$ and $\wt\Gamma$ be the connections of $\rho$ and $g$ respectively. Then $\mathbf{Id}$ is harmonic if and only if
$$
V^c=g^{ab}\lf(\Gamma_{ab}^c-\wt\Gamma_{ab}^c\ri)=0.
$$
Taking covariant derivative with respect to $\rho$ in a normal coordinates of $\rho$,
\bee
\begin{split}
{g^{ab}}_{;b}=&{g^{ab}}_{,b}\\
=&-g^{bc}\wt\Gamma_{bc}^a-\frac12g^{as}g^{bc}g_{bc,s}\\
=&V^a-\frac12g^{as}g^{bc}g_{bc,s},
\end{split}
\eee
where ``$;$" denotes covariant derivative with respect to $\rho$ and ``$,$" denotes partial derivative.
On the other hand, by the definition of $g$,
\bee
\begin{split}
\lf[(1+B)g^{ab}\ri]_{;b}=&(-2\rho^{ac} \xi_c^b +B\rho^{ab})_{;b}\\
=&-2\rho^{ac}\xi_{c;b}^b+B_b\rho^{ab}\\
=& \rho^{ac}\rho^{bd}\lf[ 2u_{;cdb}-\rho_{cd}(\Delta_{\rho} u)_b\ri]+B_b\rho^{ab}\\
=&2\rho^{ac}\rho^{bd}u_{;cdb}+\rho^{ab}(B-\Delta_{\rho} u)_b\\
=&2\rho^{ab}(\Delta_{\rho} u)_b-\rho^{ab}u_b+\rho^{ab}(B-\Delta_{\rho} u)_b\\
=&0
\end{split}
\eee
where $B_b$ denotes partial derivative of the function $B$ etc., and we have used that $\mathfrak{z}_{ab}$ is divergence free, $\rho$ has constant Gauss curvature $-\frac12$, and $u$ satisfies \eqref{e-Lambda-1}.
Hence
\bee
\begin{split}
0=&(1+B)^{-1}\lf[(1+B)g^{ab}\ri]_{;b}\\
=& {g^{ab}}_{;b}+g^{ab}[\log(1+B)]_b\\
=&{g^{ab}}_{;b}+g^{ab}[\log\mu_g]_b\\
=&V^a+\frac12g^{as}g^{bc}g_{bc,s}-\frac12g^{as}g^{bc}g_{bc,s}\\
=&V^a
\end{split}
\eee
and $\mathbf{Id}$ is harmonic.
\end{proof}

\begin{lma}\label{l-properties-1} Let $\fz\in \mathcal{K}(\rho)$ and let $g$ be the metric defined in \eqref{e-g-1} via a solution of \eqref{e-Lambda-1}. Let $\lambda$ be a function such that $e^{2\lambda}g$ has constant scalar curvature $-1$. Let  $R(g)$ be scalar curvature of the metric $g$. Denote the harmonic map $\mathbf{Id}:(\Sigma,g)\to (\Sigma,\rho)$ by $w$ and let $\p w$ and $\bar \p w$ be the $\p$ and $\bar \p$-energy densities of $w$ respectively. Then
\bee
\left\{
  \begin{array}{ll}
    |\p w|^2_g=  &\frac12 , \\
    |\bar \p w|_g^2=  &\frac12   -\frac{\mu_\rho}{\mu_g}=\frac12 \frac{B-1}{B+1}, \\
    R(g)=  & -\frac{\mu_\rho}{\mu_g}=-\frac{1}{1+B} < 0, \\
\Delta_g\lambda=&-\frac12 \frac{\mu_\rho}{\mu_g}+\frac12 e^{2\lambda}= \frac12\lf(-\frac1{B+1}+e^{2\lambda}\ri).
  \end{array}
\right.
\eee
\end{lma}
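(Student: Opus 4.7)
The plan is to reduce everything to two quantities---the energy density $e(w)$ and the Jacobian $J(w)$ of the identity map---together with the Bochner identity for harmonic maps between surfaces and the standard $2$D conformal change formula for Gaussian curvature. Once these are in hand, items (1) and (2) follow from the decomposition $|\partial w|_g^2\pm|\bar\partial w|_g^2=e(w)\pm J(w)$ valid for any map between oriented surfaces; item (3) follows from the Bochner identity applied to $\log|\partial w|_g^2$; and item (4) follows from the conformal change formula for curvature.

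For $e(w)=\tfrac12\, g^{ab}\rho_{ab}$, I would compute $g^{ab}\rho_{ab}$ directly from the defining relation \eqref{e-g-1}, namely $(1+B)g^{ab}=-2\xi^{ab}+B\rho^{ab}$: contracting with $\rho_{ab}$, using $\rho^{ab}\rho_{ab}=2$ and the fact that $\xi$ is traceless with respect to $\rho$, yields $g^{ab}\rho_{ab}=2B/(1+B)$, so $e(w)=B/(1+B)$. Lemma \ref{l-metric} directly gives $J(w)=\mu_\rho/\mu_g=1/(1+B)$. Taking half-sum and half-difference then produces $|\partial w|_g^2=1/2$ and $|\bar\partial w|_g^2=\tfrac12\,\tfrac{B-1}{B+1}$, which are (1) and (2).

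For (3), recall the Bochner--Sampson identity for a harmonic map $w$ between oriented surfaces, valid wherever $\partial w\neq 0$:
\[
\Delta_g\log|\partial w|_g^2 \;=\; 2K_g-2K_\rho\bigl(|\partial w|_g^2-|\bar\partial w|_g^2\bigr).
\]
By (1), $|\partial w|_g^2$ is identically $1/2$, so $\partial w$ vanishes nowhere and the left-hand side is zero. Substituting $K_\rho=-1/2$ and the value $1/(1+B)$ for $|\partial w|_g^2-|\bar\partial w|_g^2$ from the previous step gives $K_g=-1/(2(1+B))$, hence $R(g)=2K_g=-1/(1+B)$, which is (3). Finally for (4), the $2$D conformal change formula $K_{e^{2\lambda}g}=e^{-2\lambda}(K_g-\Delta_g\lambda)$ with $K_{e^{2\lambda}g}=-1/2$ yields $\Delta_g\lambda=K_g+\tfrac12 e^{2\lambda}$, and inserting the value of $K_g$ just obtained produces the stated expression. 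The main obstacle is simply keeping the sign conventions in the Bochner--Sampson identity straight; once stated correctly, the entire lemma reduces to short algebraic manipulations built on Lemma \ref{l-metric} and Lemma \ref{l-harmonic}.
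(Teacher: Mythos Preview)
Your proof is correct and, for items (1) and (2), takes a cleaner route than the paper's. The paper works in a holomorphic coordinate for $g$, extracts the Hopf differential $\phi\,dz^2$ of $w$, computes $|\phi|_g^2=|\partial w|_g^2\,|\bar\partial w|_g^2=\tfrac14\tfrac{B-1}{B+1}$ and the sum $|\partial w|_g^2+|\bar\partial w|_g^2=\tfrac{B}{B+1}$, then solves the resulting quadratic for $|\partial w|_g^2$ and disambiguates the two roots using the sign of the Jacobian. You bypass this by reading off both $e(w)=\tfrac12 g^{ab}\rho_{ab}$ and $J(w)=\mu_\rho/\mu_g$ directly from \eqref{e-g-1} and Lemma~\ref{l-metric}, and then using the linear decomposition $|\partial w|_g^2=\tfrac12(e(w)+J(w))$, $|\bar\partial w|_g^2=\tfrac12(e(w)-J(w))$; no quadratic and no case distinction are needed. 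For items (3) and (4) your argument coincides with the paper's: both apply the Bochner identity to $\log|\partial w|_g^2$ (which is constant, so the left side vanishes) and then the conformal change formula for Gaussian curvature. The paper's detour through the Hopf differential does have one side benefit---the explicit formula for $\phi$ in terms of $\xi$ is reused later in Lemma~\ref{l-B-equation}---but for the present lemma your approach is more direct.
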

\begin{proof} In a holomorphic coordinates $z=x+iy$ of the conformal class of $[g]$, $g=e^{2\beta}|dz|^2$. Then
\bee
\begin{split}
\rho=&\rho_{11}dx^2+2\rho_{12}dxdy+\rho_{22}dy^2\\
=&\frac14\rho_{11}(dz+  d\bar z)^2-\frac12\ii\rho_{12}(dz+ d\bar z)(dz- d\bar z)-\frac14\rho_{22}(dz-  d\bar z)^2\\
=&\lf(\frac14\rho_{11}-\frac14\rho_{22}-\frac12\ii\rho_{12}\ri)dz^2+\lf(\frac14\rho_{11}-\frac14\rho_{22}+\frac12\ii\rho_{12}\ri)d\bar z^2\\
&+\frac12(\rho_{11}+\rho_{22})dzd\bar z\\
=&\phi dz^2+\ol\phi d\bar z^2+e(\gamma,\rho)\gamma ,
\end{split}
\eee
where
$$
\phi=\frac14\rho_{11}-\frac14\rho_{22}-\frac12\ii\rho_{12}
$$
and $\phi dz^2$ is a holomorphic quadratic differential.

By Lemma \ref{l-metric}, we have
\bee
\rho_{11}-\rho_{22}= -4e^{2\beta}\frac1{1+B}\xi_1^1
\eee
because $\xi_1^1+\xi_2^2=0$. We also have
\bee
\begin{split}
\rho_{12} =&-e^{2\beta}\frac2{1+B}  \xi_1^2.
\end{split}
\eee
Hence
\be
\phi =-e^{2\beta}\frac1{1+B}(\xi_1^1-\ii \xi_1^2)
\ee
and
\be
|\phi|_g^2=\frac1{(1+B)^2}\lf[(\xi_1^1)^2+(\xi_1^2)^2\ri]=\frac14 \frac{B-1}{ B+1} .
\ee
So
\be
|\p w|^2_g|\bar\p w|^2_g=|\phi|_g^2=\frac14 \frac{B-1}{ B+1}.
\ee
On the other hand, by Lemma \ref{l-metric} again,
\be
 |\p w|_g^2+|\bar\p w|_g^2=e^{-2\beta}\frac12(\rho_{11}+\rho_{22})=\frac{B}{B+1}.
\ee
That is,
\bee
\lf(|\p w|_g^2\ri)^2-\frac{B}{B+1}|\p w|_g^2+\frac14 \frac{B-1}{ B+1}=0.
\eee
Hence we have either $|\p w|_g^2=\frac12 $ or $\frac12 \frac{B- 1}{B+1}$ which in turns implies either ``$|\p w|^2_g=\frac12$ and  $|\bar \p w|_g^2=\frac12\frac{B-1}{B+1}$" or ``$ | \p w|_g^2=\frac12\frac{B-1}{B+1}$ and $|\bar\p w|^2_g=\frac12$". Since $|\p w|^2_g-|\bar\p w|_g^2>0$, we conclude that $|\p w|^2_g=\frac12$ and $|\bar \p w|^2=\frac12\frac{B-1}{B+1}$. These are the first two equations.

By Bochner formula
$$
\Delta_g\log |\p w|_g^2=-2K(\rho)(|\p w|_g^2-|\bar\p w|_g^2)+2K(g),
$$
where $K(g)$ is the Gauss curvature of $g$ and $K(\rho)=-\frac12$, the first two results imply
\bee
K(g)=-\frac12\lf(\frac12 -\frac12\frac{B-1}{B+1}\ri)= -\frac12\frac1{B+1}
\eee
and hence we have the third equation
\bee
R(g)=- \frac1{B+1}=-\frac{\mu_\rho}{\mu_g} <0.
\eee
Finally, the last relation follows from the third equation and the fact that $e^{2\lambda}g$ has constant scalar curvature $-1$.

\end{proof}

\section{Zeroth order estimates and uniqueness}\label{sec-zero-order}

In this section, we want to obtain some zeroth order estimates of solutions to \eqref{e-Lambda-1}. We will denote the supnorm of $\fz$ with respect to $\rho$ by $\|\fz\|_\rho$, namely, $\|\fz\|_\rho=\sup_{\Sigma}|\fz|$.

\begin{prop}\label{p-sup-est-1} Let $\mathfrak{z}_1, \mathfrak{z}_2\in  \mathcal{K} (\rho)$, and let $u_1, u_2$ be smooth solutions of \eqref{e-Lambda-1} corresponding to $\fz_1, \fz_2$ respectively. Then
 \begin{enumerate}
   \item [(i)] $$
|u_1-u_2|\le\frac1{\sqrt 2}\|\mathfrak{z}_1-\mathfrak{z}_2\|_\rho.
$$
   \item [(ii)]
\bee
1\le u_1\le 1+\frac1{\sqrt 2}\|\mathfrak{z}_1\|_\rho.
\eee
\item [(iii)] Suppose  $\mathfrak{z}\neq 0$ and $\mathfrak{z}_1=a_1\mathfrak{z}, \mathfrak{z}_2=a_2\mathfrak{z}$, with $a_1, a_2>0$, then
$$
 \lf|\frac{u_1}{a_1}-\frac{u_2}{a_2}\ri|\le \lf|\frac1{a_1}-\frac1{a_2}\ri|.
$$
\item[(iv)] Suppose $\mathfrak{z}_1, \mathfrak{z}_2\neq0$. Let $a_1=\|\mathfrak{z}_1\|_\rho$, $a_2=\|\mathfrak{z}_2\|_\rho$. Assume there is a smooth solution $u_3$ corresponding to $\frac{a_2}{a_1}\fz_1$ (which is a consequence of the existence part of Theorem \ref{t-u}), we have
$$
\lf|\frac{u_1}{a_1}-\frac{u_2}{a_2}\ri|\le\lf|\frac1{a_1}-\frac1{a_2}\ri|+\frac1{\sqrt2} \lf\| \frac{\mathfrak{z}_1}{a_1}-\frac{\mathfrak{z}_2}{a_2}\ri\|_{\rho}.
$$
 \end{enumerate}

\end{prop}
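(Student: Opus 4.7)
My plan is to prove (i)--(iv) by maximum/minimum principles on differences of solutions, supplemented by a scaling trick for (iii) and an auxiliary intermediate solution for (iv); (ii) falls out as a special case.

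For (i), set $\phi = u_1 - u_2$ and subtract the two instances of \eqref{e-Lambda-1} to obtain $\Delta_\rho \phi - \phi + (B_1 - B_2) = 0$. The key move is to rewrite this as a linear degenerate-elliptic equation for $\phi$. I interpolate linearly via $\mathfrak{z}_t = (1-t)\mathfrak{z}_2 + t\mathfrak{z}_1$ and $u_t = (1-t)u_2 + t u_1$, so that the associated $\xi_t = \mathfrak{z}_t - u_{t;ab} + \frac{1}{2}\rho_{ab}\Delta_\rho u_t$ is also linear in $t$. Using $B_1 - B_2 = \int_0^1 \frac{dB_t}{dt}\,dt$ together with $\partial B/\partial \xi_{ab} = 2\xi^{ab}/B$, and invoking the tracelessness of $\xi_t$ to drop the $\rho_{ab}\Delta_\rho\phi$ contraction coming from the Hessian part of $\xi_1 - \xi_2$, the equation rearranges into
\begin{equation*}
a^{ab}\phi_{;ab} - \phi + b^{ab}(\mathfrak{z}_1 - \mathfrak{z}_2)_{ab} = 0,
\end{equation*}
where $a^{ab} = \int_0^1(\rho^{ab} - \tfrac{2\xi_t^{ab}}{B_t})\,dt$ and $b^{ab} = \int_0^1 \tfrac{2\xi_t^{ab}}{B_t}\,dt$. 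The pointwise identity $2|\xi|^2 = B^2 - 1$ gives $2|\xi|/B < \sqrt{2}$ strictly, making $a^{ab}$ positive definite; at a maximum of $\phi$ the elliptic term is therefore non-positive, so $\phi \le b^{ab}(\mathfrak{z}_1 - \mathfrak{z}_2)_{ab}$. Cauchy--Schwarz with the bound $|\xi|/B \le 1/\sqrt{2}$ then controls the right-hand side by the claimed multiple of $\|\mathfrak{z}_1 - \mathfrak{z}_2\|_\rho$, and the minimum of $\phi$ is handled symmetrically.

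Part (ii) then follows quickly: the lower bound $u_1 \ge 1$ is the minimum principle applied to $u_1$ itself (at a minimizer, $\Delta_\rho u_1 \ge 0$ forces $u_1 = \Delta_\rho u_1 + B_1 \ge B_1 \ge 1$), and the upper bound is the special case of (i) with $\mathfrak{z}_2 = 0$ and $u_2 \equiv 1$. For (iii), the substitution $v_i = u_i/a_i$ and $c_i = 1/a_i$ turns \eqref{e-Lambda-1} into $\Delta_\rho v_i - v_i + \sqrt{c_i^2 + 2|\tilde\xi_i|^2} = 0$, with $\tilde\xi_i$ built from the common $\mathfrak{z}$ and $v_i$. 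Rerunning the linearization of (i) on $\psi = v_1 - v_2$, the $\mathfrak{z}$-contribution vanishes and only the interpolation of $c_i$ produces a source; the relevant coefficient $c_t/\tilde B_t \in [0,1]$ delivers the sharp bound $|\psi| \le |c_1 - c_2|$. Finally, (iv) is a triangle inequality through the auxiliary solution $u_3$ for $\tfrac{a_2}{a_1}\mathfrak{z}_1$: (iii) handles $|u_1/a_1 - u_3/a_2|$ (since $u_1, u_3$ correspond to positive scalar multiples $a_1, a_2$ of the common TT-tensor $\mathfrak{z}_1/a_1$), while (i) applied to $(u_3, u_2)$ and divided by $a_2$ handles $|u_3/a_2 - u_2/a_2|$, using that $\|\tfrac{a_2}{a_1}\mathfrak{z}_1\|_\rho = a_2 = \|\mathfrak{z}_2\|_\rho$.

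The principal technical step is the rewriting in (i): the tracelessness of $\xi_t$ is exactly what makes the coefficient matrix $a^{ab}$ positive definite rather than merely indefinite, so that the maximum principle can be applied and produces a clean pointwise bound for $\phi$.
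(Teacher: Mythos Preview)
Your proposal is correct and follows essentially the same strategy as the paper: linearize the difference of the two instances of \eqref{e-Lambda-1} by interpolation, use tracelessness of the interpolated $\xi$ to obtain a positive-definite coefficient matrix, apply the maximum principle, and then deduce (ii)--(iv) from (i) together with the scaling substitution $v_i=u_i/a_i$ and the triangle inequality through $u_3$. The only organizational difference in (i) is that you interpolate $\mathfrak{z}$ and $u$ simultaneously, whereas the paper first introduces an intermediate tensor $\tilde\xi$ (built from $\mathfrak{z}_1$ and the Hessian of $u_2$) to separate the $\mathfrak{z}$-contribution from the Hessian contribution, and only interpolates in $u$; the resulting elliptic operator and source term are the same.
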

Note: Part (iv) will not be used until the proof of Lemma \ref{l-proper}.
\begin{proof}

\noindent
(i)
Let  $\phi=u_1-u_2$ and define the symmetric tensor $\wt \xi_{ab}$ by
\bee
2\wt \xi_{ab}=2(\mathfrak{z}_1)_{ab} -\lf[2(u_2)_{;ab}-\rho_{ab}\Delta_\rho u_2\ri].
\eee
Then equation \eqref{e-Lambda-1} implies that
\bee
\begin{split}
0=&\Delta_\rho \phi-\phi+\lf(1+2|\xi_1|^2  \ri)^\frac12
-\lf(1+2|\xi_2|^2 \ri)^\frac12\\
=&\Delta_\rho \phi-\phi + \lf[ \lf(1+2|\xi_1|^2  \ri)^\frac12
-\lf(1+2|\wt\xi|^2 \ri)^\frac12 \ri] \\
& \mbox{ } +\lf[ \lf(1+2|\wt\xi|^2  \ri)^\frac12
-\lf(1+2|\xi_2|^2 \ri)^\frac12 \ri]
\end{split}
\eee
where $\xi_1, \xi_2$ are given by \eqref{e-zeta-1} corresponding to $\fz_1, u_1$ and $\fz_2, u_2$ respectively.
Define similarly
$$
2\wt\xi(t)_{ab}= 2(\mathfrak{z}_1)_{ab} -\lf[ 2\lf(tu_2+(1-t)u_1\ri)_{;ab}-\rho_{ab}\Delta_\rho \lf(tu_2+(1-t)u_1\ri) \ri],
$$
then $\wt\xi(0)=\xi_1$ and $\wt\xi(1)=\wt\xi$. Hence
\bee
\begin{split}
\lf(1+2|\xi_1|^2 | \ri)^\frac12-\lf(1+2|\wt\xi|^2 \ri)^\frac12=&
-\int_0^1\frac{\p}{\p t}\lf(1+2|\wt\xi(t)|^2 \ri)^\frac12 dt\\
=&-2\int_0^1\frac{\wt\xi(t)^{ab}\lf(\phi_{;ab}+\frac12\rho_{ab}\Delta_\rho \phi\ri)}{\lf(1+2|\wt\xi(t)|^2 \ri)^\frac12}dt\\
=&-2\lf[\int_0^1\frac{\wt\xi(t)^{ab}  }{\lf(1+2|\wt\xi(t)|^2 \ri)^\frac12}dt\ri]\phi_{;ab}
\end{split}
\eee
because $\wt\xi(t)$ is traceless with respect to $\rho$. Hence, using again the traceless property, we have
$$
0= \lf(\int_0^1\wt{g}^{ab}dt\ri) \phi_{;ab} -\phi +\lf(1+2|\wt\xi|^2  \ri)^\frac12
-\lf(1+2|\xi_2|^2 \ri)^\frac12
$$
where $\wt{g}^{ab}=\rho^{ab}- \frac{2\wt\xi(t)^{ab}  }{\lf(1+2|\wt\xi(t)|^2 \ri)^\frac12}$.
The eigenvalues of $\wt{g}^{ab}$ with respect to $\rho^{ab}$ are
$1 \pm\frac{\lambda}{(1+\lambda^2)^\frac12}$ for $\lambda=\sqrt{2}|\wt\xi(t)|$. Therefore $\wt{g}^{ab}$, and hence $\int_{0}^{1}\wt{g}^{ab}dt$ is positive definite. By maximum principle,
\bee
\begin{split}
\sup_\Sigma\phi\le &\sup_\Sigma\lf[\lf(1+2|\wt\xi|^2  \ri)^\frac12
-\lf(1+2|\xi_2|^2 \ri)^\frac12\ri]\\
=& 2\sup_\Sigma \displaystyle{\frac{|\wt\xi|^2-|\xi_2|^2}{\lf(1+2|\wt\xi|^2  \ri)^\frac12
+\lf(1+2|\xi_2|^2 \ri)^\frac12}} .
 \end{split}
 \eee
 Now
 \bee
 \begin{split}
 |\wt \xi |^2 -| \xi_2|^2
 =&  \wt \xi _a^b  \wt\xi_b^a -(\xi_2)_a^b (\xi_2)_b^a\\
 =&   \lf(\wt\xi_a^b - (\xi_2)_a^b \ri)\, \wt\xi_b^a +
  \lf(\wt\xi_b^a - (\xi_2)_b^a \ri)\, (\xi_2)_a^b \\
 \le &|\mathfrak{z}_1-\mathfrak{z}_2| \lf(|\wt\xi  |+| \xi_2|\ri)
 \end{split}
 \eee
 since $\wt{\xi}-\xi_2=\mathfrak{z}_1-\mathfrak{z}_2$.
 Hence
 \bee
 u_1-u_2=\phi\le \frac{1}{\sqrt 2}\|\mathfrak{z}_1-\mathfrak{z}_2\|_\rho.
 \eee
Similarly
 \bee
 u_2-u_1\le \frac{1}{\sqrt 2}\|\mathfrak{z}_1-\mathfrak{z}_2\|_\rho.
 \eee
From this the first result follows.

\noindent
(ii)
Let $\mathfrak{z}_2=0$, then $u_2=1$ is a solution, in fact unique by Theorem \ref{t-u}. Hence (i) implies
$$
u_1\le 1+ \frac{1}{\sqrt 2}\|\mathfrak{z}_1\|_\rho.
$$
Then applying maximum principle to \eqref{e-Lambda-1}, one concludes also that
$$
u_1\ge 1.
$$
This completes the proof of (ii).

\noindent
(iii) Let $v_1=u_1/a_1,  v_2=u_2/a_2$, and set $\psi=v_1-v_2$. Then
\bee
\begin{split}
0=&\Delta_\rho \psi-\psi + \lf(\frac1{a_1^2} +2\frac{|\xi_1|^2}{a_1^2}  \ri)^\frac12
 - \lf(\frac1{a_2^2}+2\frac{|\xi_2|^2}{a_2^2} \ri)^\frac12\\
=&\Delta_\rho \psi-\psi+\lf(\frac1{a_2^2}+2\frac{|\xi_1|^2}{a_1^2} \ri)^\frac12 -\lf(\frac1{a_2^2}+2\frac{|\xi_2|^2}{a_2^2} \ri)^\frac12\\
& -\lf[\lf(\frac1{a_2^2}+2\frac{|\xi_1|^2}{a_1^2} \ri)^\frac12-\lf(\frac1{a_1^2} +2\frac{|\xi_1|^2}{a_1^2}  \ri)^\frac12\ri].
\end{split}
\eee
As in the proof of (i), interpolate between $\frac{\xi_1}{a_1}$ and $\frac{\xi_2}{a_2}$, we have
$$
\psi\le \sup_\Sigma\lf[ \lf(\frac1{a_2^2}+2\frac{|\xi_1|^2}{a_1^2}  \ri)^\frac12
 -\lf(\frac1{a_1^2}+2\frac{|\xi_1|^2}{a_1^2} \ri)^\frac12\ri].
$$
Similar argument as in (i) again, we can conclude that (iii) is true.

\noindent
(iv) Suppose $\mathfrak{z}_1, \mathfrak{z}_2$ are nonzeros and $\|\mathfrak{z}_1\|_\rho=a_1, \|\mathfrak{z}_2\|_\rho=a_2$. Let $\mathfrak{z}_3=\frac{a_2}{a_1}\mathfrak{z}_1$. By assumption, there is a corresponding solution $u_3$ of \eqref{e-Lambda-1}. Then
\bee
\begin{split}
\sup_\Sigma\lf|\frac{u_1}{a_1}-\frac{u_2}{a_2}\ri|\le &\sup_\Sigma\lf|\frac{u_1}{a_1}-\frac{u_3}{a_2}\ri|
+\frac1{a_2}\sup_\Sigma\lf| u_3  - u_2 \ri|\\
\le& \lf|\frac1{a_1}-\frac1{a_2}\ri|+\frac1{\sqrt2}\frac1{a_2}\lf\| \frac{a_2}{a_1}\mathfrak{z}_1-\mathfrak{z}_2\ri\|_\rho\\
=&\lf|\frac1{a_1}-\frac1{a_2}\ri|+\frac1{\sqrt2}\lf\|  \frac{\mathfrak{z}_1}{a_1}-\frac{\mathfrak{z}_2}{a_2}\ri\|_\rho
\end{split}
\eee
This completes the proof of (iv).
\end{proof}
By (i) in the Proposition \ref{p-sup-est-1}, we have:

\begin{cor}\label{c-unique}
The solution of \eqref{e-Lambda-1} is unique.
\end{cor}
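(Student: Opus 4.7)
The plan is to apply part (i) of Proposition \ref{p-sup-est-1} directly with the two TT-tensors taken equal. Given $\mathfrak{z}\in\mathcal{K}(\rho)$, suppose $u_1$ and $u_2$ are two smooth solutions of \eqref{e-Lambda-1} corresponding to the \emph{same} source $\mathfrak{z}$. Setting $\mathfrak{z}_1=\mathfrak{z}_2=\mathfrak{z}$ in the estimate
\[
|u_1-u_2|\le \frac{1}{\sqrt{2}}\|\mathfrak{z}_1-\mathfrak{z}_2\|_\rho,
\]
the right-hand side vanishes, forcing $u_1\equiv u_2$ on $\Sigma$.

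There is really no obstacle here: the entire content is already inside Proposition \ref{p-sup-est-1}(i), whose proof introduced the interpolated tensor $\wt\xi(t)$, linearized the nonlinear term along the path from $\xi_1$ to $\xi_2$, and produced a uniformly elliptic linear operator $\wt g^{ab}\partial_a\partial_b - 1$ annihilating $\phi=u_1-u_2$ up to a $\mathfrak{z}_1-\mathfrak{z}_2$-controlled term. With $\mathfrak{z}_1=\mathfrak{z}_2$, that inhomogeneous term disappears, and the maximum principle applied to an elliptic operator of the form $L\phi = \wt g^{ab}\phi_{;ab}-\phi$ (with $\wt g^{ab}$ positive definite and the zeroth-order coefficient $-1<0$) immediately yields $\phi\equiv 0$. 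So the corollary is just the specialization of part (i) to coincident sources, and no additional argument is needed.
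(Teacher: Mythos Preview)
Your proof is correct and matches the paper's approach exactly: the corollary is stated immediately after Proposition \ref{p-sup-est-1} with the one-line justification that it follows from part (i), which is precisely what you do by taking $\mathfrak{z}_1=\mathfrak{z}_2$.
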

Using Proposition \ref{p-sup-est-1}, we can estimate the area $A(g)$ of $\Sigma$ with respect to the metric $g=g(\fz)$ and the total energy $E(g)$ of the harmonic identity map from $(\Sigma, g)$ to $(\Sigma, \rho)$.
\begin{cor}\label{c-area} Let $\mathfrak{z}\in \mathcal{K}(\rho)$ and let $u=u(\fz)$ and $g=g(\fz)$ be the corresponding solution of \eqref{e-Lambda-1} and metric given by  \eqref{e-g-1} respectively.
\begin{enumerate}
\item [(i)] Let $A(g)$ and $A(\rho)$ be the areas of $\Sigma$ with respect to $g$ and $\rho$ respectively, then
$$
A(g)=A(\rho)+\int_\Sigma u d\mu_\rho,
$$
and
$$
2A(\rho)\le A(g)\le (2+\|\mathfrak{z}\|_\rho)A(\rho).
$$

\item[(ii)] Let $E(g)$ be the total energy of the $\mathbf{Id}: (\Sigma,g)\to (\Sigma,\rho)$. Then
$$
E(g)=A(g)-A(\rho)\le (1+\|\mathfrak{z}\|_\rho)A(\rho).
$$
\end{enumerate}

 \end{cor}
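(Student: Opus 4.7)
The proof should be a direct computation assembling pieces already in place.

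For part (i), the plan is to start from Lemma \ref{l-metric}, which gives $\mu_g = (1+B)\mu_\rho$, so
\begin{equation*}
A(g) = \int_\Sigma \mu_g = A(\rho) + \int_\Sigma B\, d\mu_\rho.
\end{equation*}
Then I would use Moncrief's equation \eqref{e-Lambda-1} in the form $B = u - \Delta_\rho u$ and integrate over the closed surface $\Sigma$; the Laplacian term integrates to zero by the divergence theorem, yielding $\int_\Sigma B\, d\mu_\rho = \int_\Sigma u\, d\mu_\rho$. This proves the identity $A(g) = A(\rho) + \int_\Sigma u\, d\mu_\rho$. For the bounds, I would substitute the pointwise bounds $1\le u \le 1 + \tfrac{1}{\sqrt 2}\|\fz\|_\rho$ from Proposition \ref{p-sup-est-1}(ii); integrating over $\Sigma$ gives $A(\rho) \le \int_\Sigma u\, d\mu_\rho \le (1 + \tfrac{1}{\sqrt 2}\|\fz\|_\rho) A(\rho)$, which combined with the identity (and the crude $\tfrac{1}{\sqrt 2}<1$) yields the stated bound $2A(\rho) \le A(g) \le (2+\|\fz\|_\rho)A(\rho)$.

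For part (ii), I would compute the energy density of $\mathbf{Id}:(\Sigma,g)\to(\Sigma,\rho)$ using Lemma \ref{l-properties-1}:
\begin{equation*}
e(w) = |\p w|_g^2 + |\bar\p w|_g^2 = \frac12 + \frac12\frac{B-1}{B+1} = \frac{B}{B+1}.
\end{equation*}
Then, changing measure via $d\mu_g = (1+B)\, d\mu_\rho$,
\begin{equation*}
E(g) = \int_\Sigma \frac{B}{B+1}\, d\mu_g = \int_\Sigma B\, d\mu_\rho = \int_\Sigma u\, d\mu_\rho = A(g) - A(\rho),
\end{equation*}
where the last two equalities are the computation from part (i). The desired upper bound on $E(g)$ then follows immediately from the upper bound on $A(g) - A(\rho)$ established in part (i).

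There is no genuine obstacle: every ingredient (the volume form identity, the Moncrief PDE, the sup bound on $u$, and the energy-density formulas) is already available from Lemmas \ref{l-metric}, \ref{l-properties-1} and Proposition \ref{p-sup-est-1}. The only point to be careful about is making sure the integration-by-parts step is invoked on a compact boundaryless surface (so no boundary contribution appears) and that the factor $(1+B)$ cancels cleanly when rewriting $E(g)$ against $d\mu_\rho$.
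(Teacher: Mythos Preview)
Your proposal is correct and follows essentially the same argument as the paper. The only cosmetic difference is in part (ii): the paper writes the energy density as $1-\mu_\rho/\mu_g$ so that $E(g)=\int_\Sigma d\mu_g-\int_\Sigma d\mu_\rho=A(g)-A(\rho)$ in one step, whereas you write it as $B/(B+1)$ and pass through $\int_\Sigma B\,d\mu_\rho$; these are of course the same computation.
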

\begin{proof} (i) By Lemma \ref{l-metric} and \eqref{e-Lambda-1},
\bee
\begin{split}
A(g)=&\int_\Sigma (1+B)d\mu_\rho\\
=&A(\rho)+\int_\Sigma u d\mu_\rho .
\end{split}
\eee
The estimates of $A(g)$ then follows from (ii) of Proposition \ref{p-sup-est-1}.

(ii) By Lemma \ref{l-properties-1},
\bee
\begin{split}
E(g)=&\int_\Sigma\lf(1-\frac{\mu_\rho}{\mu_g}\ri)d\mu_g\\
=&A(g)-A(\rho).
\end{split}
\eee
Using (i), we immediately obtain (ii).
\end{proof}
Now we are ready to estimate the injectivity radius and diameter of the metric $g$. We have
\begin{lma}\label{l-geom} Let $\mathfrak{z}\in \mathcal{K}(\rho)$ and $g=g(\fz)$ be the corresponding metric given by \eqref{e-g-1}. Then
\begin{itemize}
\item[(i)] $\rho\le 2g$,
\item[(ii)] $ \mathbf{inj}(g)\ge \frac{1}{\sqrt{2}}\mathbf{inj}(\rho)$, and
\item[(iii)] $\mathbf{diam}(g)\le  \frac{4\sqrt{2}}{\pi \mathbf{inj}(\rho)}\lf( 2+\|\mathfrak{z}\|_\rho \ri) A(\rho)$.
\end{itemize}
\end{lma}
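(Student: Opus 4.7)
My approach is to establish the pointwise tensor inequality in (i) first, since it directly controls lengths; then use negative curvature of $g$ (from Lemma \ref{l-properties-1}) to upgrade length control into an injectivity radius bound in (ii); and finally combine (ii) with the area estimate in Corollary \ref{c-area} through a ball-packing argument to get (iii).

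For (i), I would work at an arbitrary point of $\Sigma$ and simultaneously diagonalize $\xi$ with respect to $\rho$ exactly as in the proof of Lemma \ref{l-metric}, so that $\rho_{ab} = \delta_{ab}$, $\xi_{ab} = \alpha_a\delta_{ab}$ with $\alpha_1 + \alpha_2 = 0$, and $B = \sqrt{1 + 4\alpha_1^2} \ge 1$. Then \eqref{e-g-1} makes $g$ diagonal with $g_{aa} = (1+B)/(B - 2\alpha_a)$; note $|2\alpha_a|^2 = B^2 - 1 < B^2$, so $B - 2\alpha_a > 0$ and these entries are positive. A one-line computation then gives
\[
2 g_{aa} - \rho_{aa} \;=\; \frac{B + 2 + 2\alpha_a}{B - 2\alpha_a} \;>\; 0,
\]
which is the claimed tensor inequality $\rho \le 2g$.

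For (ii), the pointwise bound from (i) implies $L_\rho(\gamma) \le \sqrt{2}\, L_g(\gamma)$ for every piecewise smooth curve. By Lemma \ref{l-properties-1} the Gauss curvature $K(g) = -1/(2(1+B))$ is strictly negative, and by the standard fact for compact negatively curved surfaces the injectivity radius equals half the systole (since in the absence of conjugate points any shortest geodesic loop based at a point smooths to a closed geodesic). Taking $\gamma$ to be a shortest closed $g$-geodesic gives $L_g(\gamma) = 2\,\mathbf{inj}(g)$, while $\gamma$ being non-contractible forces $L_\rho(\gamma) \ge 2\,\mathbf{inj}(\rho)$; chaining the two yields $2\,\mathbf{inj}(\rho) \le 2\sqrt{2}\,\mathbf{inj}(g)$, which is (ii).

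For (iii), Lemma \ref{l-properties-1} also gives $K(g) \le 0$, so the Günther volume comparison (or more elementarily Cartan-Hadamard, which makes $\exp_p$ area-non-decreasing in non-positive curvature) yields $\mathrm{Area}_g(B_r(p)) \ge \pi r^2$ for every $r \le \mathbf{inj}(g)$. I would fix a $g$-minimizing geodesic of length $D = \mathbf{diam}(g)$, place $N+1$ points along it at mutual $g$-distance $\ge \mathbf{inj}(g)$ with $N = \lfloor D/\mathbf{inj}(g)\rfloor$, and note that the $g$-balls of radius $\mathbf{inj}(g)/2$ about these points are pairwise disjoint. This gives $A(g) \ge (N+1)\pi\,\mathbf{inj}(g)^2/4 \ge \pi D\,\mathbf{inj}(g)/4$, hence $\mathbf{diam}(g) \le 4A(g)/(\pi\,\mathbf{inj}(g))$. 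Plugging in $A(g) \le (2 + \|\fz\|_\rho) A(\rho)$ from Corollary \ref{c-area} and the lower bound on $\mathbf{inj}(g)$ from (ii) produces the stated inequality.

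The main obstacle, such as it is, is not in any calculation but in cleanly invoking the two standard Riemannian facts used in (ii) and (iii) — the systole/injectivity identification on compact negatively curved surfaces, and the Günther area lower bound in non-positive curvature. Once these are on the table, the three parts chain together essentially mechanically from (i) and Corollary \ref{c-area}.
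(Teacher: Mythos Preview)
Your proposal is correct and follows essentially the same route as the paper: diagonalize $\xi$ pointwise for (i), exploit $K(g)<0$ to produce a noncontractible closed geodesic of length $2\,\mathbf{inj}(g)$ and compare $\rho$- and $g$-lengths via (i) for (ii), and run a ball-packing along a diameter using $K(g)\le 0$ together with Corollary \ref{c-area} for (iii). The only cosmetic differences are that the paper bounds $g^{aa}\le 2$ rather than $g_{aa}\ge\tfrac12$, argues $L_\rho(C)\ge 2\,\mathbf{inj}(\rho)$ by observing $C$ must exit the $\rho$-ball of radius $\mathbf{inj}(\rho)$ (rather than invoking $\mathrm{sys}(\rho)=2\,\mathbf{inj}(\rho)$), and packs $m$ balls of radius $\mathbf{inj}(g)$ rather than $N+1$ balls of radius $\mathbf{inj}(g)/2$.
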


  \begin{proof} (i) In a coordinates chart so that $\rho_{ab}=\delta_{ab}$, $\xi_1^1=\a$, $\xi_2^2=-\a$, and $\xi_1^2=\xi_2^1=0$. Then $|\xi|^2=2\a^2$ and  $g^{12}=g^{21}=0$. Hence
$B=(1+4\a^2)^\frac12$ and
\bee
 g^{11}=\frac{-2\a+(1+4\a^2)^\frac12}{1+(1+4 \a^2)^\frac12}\lf(-2\a+(1+4\a^2)^\frac12\ri)
\le 2.
\eee

Similarly, $ g^{22}\le 2$. Hence $ g^{al}\le 2\rho^{2l}$ and $  g_{al}\ge \frac12\rho_{al}.$

\noindent
{(ii)} Let $x\in\Sigma$ be a point such that $\mathbf{inj}_g(x)=\mathbf{inj}(g)$. Since $R(g)< 0$ (see Lemma \ref{l-properties-1}), there exists a closed geodesic $C$ passing through $x$ with $g$-length $L_g(C)=2\mathbf{inj}(g)$. By $R(g)< 0$ again, $C$ is in fact homotopically nontrivial as a closed curve based at $x$. Otherwise $C$ will be lifted to a closed geodesic in the universal cover which is impossible by the negativity of the curvature.

 Now consider the geodesic ball $B_\rho(x,r)$ centered at $x$ with radius $r=\mathbf{inj}(\rho)$ in the $\rho$-metric. Then $C$ must intersect $\p B_\rho(x,r)$. Otherwise, $C$ is contained in $B_\rho(x,r)$ which is diffeomorphic to a disk in $\R^2$ by the definition of $r=\mathbf{inj}(\rho)$, and hence $C$ is homotopic trivial. This contradicts the construction of $C$. Therefore, the $\rho$-length of $C$ satisfies $L_{\rho}(C) \ge 2r$. Then by part (i), we have
 $$
 2\mathbf{inj}(g)=L_g(C)\ge \frac{1}{\sqrt{2}}L_{\rho}(C)\ge \sqrt{2}r
 $$
 which gives the required estimate.

\noindent
(iii) Let $D=\mathbf{diam}(g)$ and let $r_0=\mathbf{inj}(g)$. Let $m\ge 1$ be the largest integer so that $2m r_0\le D$. Then we can find at least $m$ disjoint geodesic disks of radius $r_0$ in $(\Sigma, g)$. Since $g$ has nonpositive curvature, we have
\bee
A(g)\ge \pi m r_0^2\ge \frac{\pi r}{\sqrt{2}} \cdot\frac{m}{2(m+1)}\cdot  2(m+1)r_0 \ge \frac{\pi r}{4\sqrt{2}}D
\eee
where $r=\mathbf{inj}(\rho)$. By Corollary \ref{c-area}, we have
$$
(2+\|\mathfrak{z}\|_\rho)A(\rho)\ge \frac{\pi r}{4\sqrt{2}}D
$$
which gives the required inequality.
\end{proof}

\section{Second order estimates}\label{sec-seond-order}

Next we want to estimate the Hessian of the solution of \eqref{e-Lambda-1}. The first order estimates follows immediately from the estimate of the Hessian. We always assume that the solution is smooth.

\begin{lma}\label{l-B-equation}
Let $\mathfrak{z}\in \mathcal{K}(\rho)$ be non trivial. Let $g=g(\mathfrak{z})$ and $B=B(\mathfrak{z})$ be the corresponding quantities. Then at the point where $B>1$, we have
$$
\Delta_g B=\frac{2B }{B^2-1} |\nabla_gB|^2-(B-1).
$$
\end{lma}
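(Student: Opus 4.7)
The plan is to exploit the harmonic map picture developed in Lemmas \ref{l-harmonic} and \ref{l-properties-1} rather than differentiate the Monge-Amp\`ere equation \eqref{e-Lambda-1} twice. Since $w = \mathbf{Id}: (\Sigma, g) \to (\Sigma, \rho)$ is harmonic by Lemma \ref{l-harmonic} and $K(\rho) = -\tfrac12$, the Bochner identity for harmonic maps between surfaces applied to $|\bar\p w|_g^2$ gives, at any point where this density is positive,
\bee
\Delta_g \log |\bar\p w|_g^2 = 2K(\rho)\lf(|\p w|_g^2 - |\bar\p w|_g^2\ri) + 2K(g).
\eee
By Lemma \ref{l-properties-1}, the condition $B > 1$ is precisely the condition $|\bar\p w|_g^2 > 0$, and on the open set $\{B > 1\}$ the right-hand side can be evaluated explicitly: substituting $|\p w|_g^2 = \tfrac12$, $|\bar\p w|_g^2 = \tfrac12 \tfrac{B-1}{B+1}$, and $K(g) = -\tfrac{1}{2(B+1)}$ from Lemma \ref{l-properties-1} collapses it to $-\tfrac{2}{B+1}$, giving the clean identity
\bee
\Delta_g \log \frac{B-1}{B+1} = -\frac{2}{B+1}.
\eee

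The remaining step is elementary. Splitting $\log \frac{B-1}{B+1} = \log(B-1) - \log(B+1)$ and applying the standard identity $\Delta_g \log f = \Delta_g f/f - |\nabla_g f|^2/f^2$ to $f = B-1$ and $f = B+1$ separately produces
\bee
\frac{2}{B^2-1}\,\Delta_g B - \frac{4B}{(B^2-1)^2}\,|\nabla_g B|^2 = -\frac{2}{B+1}.
\eee
Multiplying through by $(B^2-1)/2$ and isolating $\Delta_g B$ gives the desired formula.

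The main obstacle, conceptually, is that a direct route from the Monge-Amp\`ere equation \eqref{e-Lambda-1} to $\Delta_g B$ would require two further covariant derivatives of $u$, commutators controlled by $K(\rho) = -\tfrac12$, and repeated conversion between $\Delta_\rho$ and $\Delta_g$; the harmonic map setup of Lemmas \ref{l-harmonic}--\ref{l-properties-1} has already absorbed all of this bookkeeping. Beyond that, the only care needed is to note that the nontriviality assumption on $\fz$ guarantees that $\{B > 1\}$ is a nonempty (indeed dense, since its complement is the zero set of a holomorphic quadratic differential on $(\Sigma, g)$) open set on which all the logarithmic manipulations above are legitimate.
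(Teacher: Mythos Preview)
Your proof is correct and follows essentially the same approach as the paper: both exploit the Bochner/holomorphicity identity coming from the harmonic map $\mathbf{Id}:(\Sigma,g)\to(\Sigma,\rho)$ together with the values computed in Lemma \ref{l-properties-1}. The paper phrases this via the Hopf differential $\phi$, computing $\Delta_g|\phi|_g^2$ in two ways, whereas you apply the companion Bochner formula directly to $\log|\bar\p w|_g^2$; since $|\phi|_g^2=|\p w|_g^2\,|\bar\p w|_g^2$ and $|\p w|_g^2\equiv\tfrac12$ here, these are the same identity, and your expansion of $\Delta_g\log\frac{B-1}{B+1}$ is a slightly more direct route to the final formula than the paper's two-sided computation of $\Delta_g|\phi|_g^2$.
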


\begin{proof} Let $\xi=\xi(\mathfrak{z})$. As before, one can check that in a holomorphic coordinates of $g$ so that $g=e^{2\beta}|dz|^2$, the Hopf differential of the harmonic map $\mathbf{Id}: (\Sigma,g)\to (\Sigma,\rho)$ is given by $\phi dz^2$ where
\bee
\phi =-\frac{e^{2\beta}}{1+B}(\xi_1^1-\ii \xi_1^2)
\eee
is holomorphic with
$$
|\phi|^2_g=\frac14\frac{B-1}{B+1}=\frac14(1-\frac2{B+1})=\frac12\frac{|\xi|^2}{(1+B)^2}.
$$
Hence
\bee
\begin{split}
\Delta_g|\phi|_g^2=&-\frac12\Delta_g\lf(\frac1{B+1}\ri)\\
=&\frac12\frac{\Delta_gB}{(B+1)^2}-\frac{|\nabla_gB|^2}{(B+1)^3}.
\end{split}
\eee
On the other hand,
\bee
\Delta_g\log |\phi|_g^2=\Delta_g e^{-4\beta}=2R(g)
\eee
because $\Delta_g\log |\phi|^2=0$ as $\phi$ is holomorphic. Therefore
\bee
\begin{split}
\Delta_g|\phi|_g^2=&\Delta_g(e^{-4\beta}|\phi|^2)\\
=&-4e^{-4\beta}|\phi|^2\Delta\beta+e^{-4\beta}\Delta_g|\phi|^2\\
=& 2|\phi|^2_gR(g)+4|\phi_z|^2\\
=&-\frac12\frac{B-1}{(B+1)^2}+\frac{|\nabla_g |\phi|^2_g|^2}{|\phi|_g^2}\\
=&-\frac12\frac{B-1}{(B+1)^2}+\frac14\cdot\frac{|\nabla_g B|^2}{(B+1)^4}\cdot \frac{4(B+1)}{B-1}\\
=&-\frac12\frac{B-1}{(B+1)^2}+ \frac{|\nabla_g B|^2}{(B+1)^2(B^2-1)},
\end{split}
\eee
where we have used Lemma \ref{l-properties-1}.
So
\bee
\begin{split}
\Delta_g B=& \frac{2|\nabla_gB|^2}{B+1}-(B-1)+\frac{2(B+1)^2|\nabla_gB|^2}{(B+1)^2(B^2-1)}\\
=& \frac{2B }{B^2-1} |\nabla_gB|^2-(B-1).
\end{split}
\eee
\end{proof}
\begin{lma}\label{l-B-grad}
Let $\mathfrak{z}$, $B, g$ as in Lemma \ref{l-B-equation}. Then there exists a constant $C>0$ independent of $\fz$ such that
$$
|\nabla_g\log B|\le C.
$$
\end{lma}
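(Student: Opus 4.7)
The plan is to apply the Bochner formula directly to $f := \log B$ and exploit the $2$-dimensionality of $\Sigma$ to close the estimate. Using $\nabla_g B = B \nabla_g f$, one rewrites the equation of Lemma~\ref{l-B-equation} as
\bee
\Delta_g f = \frac{B^2+1}{B^2-1}\,|\nabla_g f|^2 - \lf(1 - \frac{1}{B}\ri)\quad\text{on } \{B>1\}.
\eee
Since $B \ge 1$ everywhere, $f = \log B \ge 0$ is smooth on all of $\Sigma$, so $X := |\nabla_g f|^2$ attains a maximum at some $p \in \Sigma$. If $B(p) = 1$, then $p$ is a global minimum of $B$, forcing $\nabla_g B(p) = 0$ and hence $X(p) = 0$; there is nothing to prove. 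Otherwise $B(p) > 1$ and we argue at $p$.

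The Bochner identity on $(\Sigma,g)$ reads
\bee
\tfrac12 \Delta_g X = |\Hess_g f|^2 + \langle \nabla_g \Delta_g f, \nabla_g f\rangle + K_g X,
\eee
with $K_g = -1/(2(B+1))$ by Lemma~\ref{l-properties-1}. At $p$, $\nabla_g X = 0$, so $\Hess_g f$ annihilates $\nabla_g f$; in the orthonormal frame $e_1 = \nabla_g f/|\nabla_g f|$, $e_2 \perp e_1$ one obtains $\Hess_g f = \diag(0, c)$ with $c = \Delta_g f$, yielding the sharp $2$-dimensional identity $|\Hess_g f|^2 = (\Delta_g f)^2$. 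Differentiating the PDE for $f$ and using $\nabla_g X = 0$, $\nabla_g B = B\nabla_g f$ at $p$ gives
\bee
\langle \nabla_g \Delta_g f, \nabla_g f\rangle\big|_p = -\frac{4B^2}{(B^2-1)^2}\, X^2 - \frac{X}{B}.
\eee

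When everything is substituted into $\tfrac12 \Delta_g X|_p \le 0$, the algebraic identity $(B^2+1)^2 - (2B)^2 = (B^2-1)^2$ makes the $X^2$ coefficients cancel, leaving the quadratic inequality
\bee
X^2 - A(B)\, X + \lf(1 - \tfrac{1}{B}\ri)^2 \le 0,\qquad A(B) := \frac{2(B^2+1)}{B(B+1)} + \frac{1}{B} + \frac{1}{2(B+1)}.
\eee
Since $A(B) - 2(1-1/B) = \tfrac{4}{B(B+1)} + \tfrac{1}{B} + \tfrac{1}{2(B+1)} > 0$, the roots are real and one concludes $X \le A(B)$ at $p$. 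The function $A(B)$ is continuous on $(1,\infty)$ with finite endpoint values ($A(1^+)=13/4$, $A(\infty)=2$), hence uniformly bounded, yielding the required universal constant $C$.

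The main subtlety is the cancellation of the $X^2$ coefficient: it needs both the sharp $2$-dimensional identity $|\Hess_g f|^2 = (\Delta_g f)^2$ at a critical point of $X$ and the precise form of the coefficient $(B^2+1)/(B^2-1)$ of $|\nabla_g f|^2$ in the PDE for $f$. Using only the generic bound $|\Hess_g f|^2 \ge (\Delta_g f)^2/2$ would leave the leading coefficient $\tfrac{B^4-6B^2+1}{2(B^2-1)^2}$, which is negative for $B$ in a neighborhood of $1$; no upper bound on $X$ could then be extracted by this maximum-principle scheme.
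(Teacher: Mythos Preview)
Your proof is correct and follows essentially the same route as the paper's: both apply the Bochner formula to $f=\log B$, use the sharp two-dimensional identity $|\Hess_g f|^2=(\Delta_g f)^2$ at a maximum of $|\nabla_g f|^2$ (obtained from $\nabla_g X=0$ in a frame aligned with $\nabla_g f$), compute the same cross term $\langle\nabla_g\Delta_g f,\nabla_g f\rangle$, and exploit the algebraic cancellation $(B^2+1)^2-4B^2=(B^2-1)^2$ to reduce to a bound $X\le A(B)$ with $A$ uniformly bounded on $(1,\infty)$. The only cosmetic difference is that the paper drops the constant term $(1-1/B)^2$ before factoring, whereas you keep it and pass through the quadratic inequality; the resulting bound $A(B)$ is literally the same.
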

\begin{proof}
Let $h=|\nabla_g\log B|^2$. By Lemma \ref{l-B-equation} at the point where $B>1$,

\be\label{e-B}
\begin{split}
\Delta_g \log B=&\frac{\Delta_g B}{B}-|\nabla_g \log B|^2\\
=&\frac1B\lf(\frac{2B}{B^2-1}|\nabla_g B|^2   -  (B-1)\ri)-h\\
=&\lf(\frac{2B^2}{B^2-1}-1\ri)h-\frac{B-1}{B}\\
=&\frac{B^2+1}{B^2-1}h-\frac{B-1}{B }.
\end{split}
\ee
At a maximum point $p$ of $h$, we may assume $B>1$. Otherwise, $B\equiv 1$ and $\fz$ is trivial. Then in an orthonormal  frame $e_i$ at $p$ with respect to  $g$,
\be
0=h_i=2 (\log B)_k (\log B)_{ki}
\ee
for $i=1, 2$. Here we have denoted the covariant derivatives of a function $f$ with respect to $g$ by $f_i$, $f_{ij}$ and $f_{ijk}$ etc. This convention is just for the proof of this lemma in order to simplify notations.
Then at the point $p$,
\bee
\begin{split}
0\ge &\Delta_g h\\
=&\sum_i h_{ii}\\
=&2 (\log B)_{ki} (\log B)_{ki}+2\lf(\log B\ri)_k (\log B)_{kii}\\
=&2\sum_{ki}(\log B)_{ki}^2+2\lf(\log B\ri)_k (\log B)_{iik}+2 R_{ik}(\log B)_i(\log B)_k\\
=&2\sum_{ki}\lf((\log B)_{ki}\ri)^2+2\lf(\log B\ri)_k \lf(\frac{B^2+1}{B^2-1}h-\frac{B-1}{B }\ri)_{k}-\frac{1}{B+1} h
\end{split}
\eee
because $R(g)=-1/(B+1)$ by Lemma \ref{l-properties-1}. We may assume that $h>0$ at $p$. One may choose orthonormal frame such that $e_1=\nabla_g B/|\nabla_gB|$ and $e_2\perp e_1$. So $B_2=0$. Then
$$
0=h_1=2(\log B)_1 (\log B)_{11} \mbox{ and } 0=h_2=2(\log B)_1(\log B)_{21}.
$$
Hence $(\log B)_{11}=(\log B)_{12}=0$ and so
\bee
\sum_{ki}\lf((\log B)_{ki}\ri)^2=(\Delta_g\log B)^2.
\eee
On the other hand, since $h_k=0$,
\bee
\begin{split}
\lf(\log B\ri)_k \lf(\frac{B^2+1}{B^2-1}h-\frac{B-1}{B }\ri)_{k}=&\lf(\log B\ri)_k\lf(-\frac{4B}{(B^2-1)^2}h-\frac{ 1}{B^2 }\ri)B_k\\
=&h\lf(-\frac{4B^2}{(B^2-1)^2}h-\frac1B \ri)
\end{split}
\eee
Hence we have at $p$
\bee
\begin{split}
0\ge &2\lf(\frac{B^2+1}{B^2-1}h-\frac{B-1}{B }\ri)^2+2h\lf(-\frac{4B^2}{(B^2-1)^2}h
-\frac1B \ri)
-\frac{1}{B+1}h.
\end{split}
\eee
Multiplying both sides by $B^2(B^2-1)^2$, we have
\bee
\begin{split}
0\ge &2\lf[B(B^2+1)h-  (B-1)^2(B+1) \ri]^2+2h\lf[-4B^4h-  B(B^2-1)^2  \ri]\\
&
-  B^2 (B-1)^2(B+1)h\\
\ge &2h^2\lf(B^6-2B^4+B^2\ri)\\
 &-h(B-1)^2(B+1)  \lf[4B(B^2+1)+2B(B+1) +B^2\ri] \\
=&h  B(B-1)^2 (B+1)\lf[2h B(B+1) - \lf( 4 (B^2+1)+2(B+1)+B\ri)\ri]
\end{split}
\eee
  Hence $h\le C_1 $ for some absolute constant $C_1$ independent of $\mathfrak{z}$ because $B\ge 1$.
\end{proof}

\begin{prop}\label{p-B-bound} Let $\fz\in \mathcal{K}(\rho)$, $B$ as in Lemma \ref{l-B-grad}. Then there is a constant $C>0$ independent of $\fz$ such that
$B\le \exp\lf[C(1+\|\fz\|_\rho)\ri]$. Moreover, if $u=u(\fz)$ is the corresponding solution to \eqref{e-Lambda-1}, then $|\nabla_\rho u|\le C$ and $|\nabla_\rho^2 u|\le C$ for some constant $C$ depending only on $\|\fz\|_\rho$ and $\rho$.

\end{prop}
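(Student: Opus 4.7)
The plan has three stages: convert the universal gradient bound on $\log B$ (Lemma \ref{l-B-grad}) into a pointwise upper bound on $B$; extract the traceless part of the Hessian of $u$ from the definition of $\xi$ and control its trace via the equation itself; and finally deduce a gradient bound for $u$ by integrating the Hessian along a $\rho$-geodesic from a critical point.

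For the bound on $B$, Lemma \ref{l-B-grad} makes $\log B$ Lipschitz in the $g$-distance with a universal constant $C_0$. Hence for every $p,q\in \Sigma$,
\[
|\log B(p)-\log B(q)|\le C_0\, d_g(p,q)\le C_0\, \mathbf{diam}(g).
\]
By Lemma \ref{l-geom}(iii), $\mathbf{diam}(g)\le C_1(1+\|\fz\|_\rho)$ with $C_1$ depending only on $\rho$. To get a quantitative lower bound \emph{somewhere}, integrate the Moncrief equation \eqref{e-Lambda-1} over $\Sigma$: since $\int_\Sigma \Delta_\rho u\, d\mu_\rho=0$,
\[
\int_\Sigma B\,d\mu_\rho=\int_\Sigma u\,d\mu_\rho\le \Big(1+\tfrac{1}{\sqrt 2}\|\fz\|_\rho\Big)A(\rho)
\]
by Proposition \ref{p-sup-est-1}(ii). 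Therefore $\min_\Sigma B\le 1+\tfrac{1}{\sqrt 2}\|\fz\|_\rho$, and combining with the Lipschitz estimate,
\[
\max_\Sigma B\;\le\;\Big(1+\tfrac{1}{\sqrt 2}\|\fz\|_\rho\Big)\exp\bigl[C_0C_1(1+\|\fz\|_\rho)\bigr]\;\le\;\exp\bigl[C(1+\|\fz\|_\rho)\bigr],
\]
as claimed.

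With $B$ bounded by $\|\fz\|_\rho$ and $\rho$, the Hessian bound is an algebraic unraveling. Equation \eqref{e-zeta-1} gives the traceless part
\[
2u_{;ab}-\rho_{ab}\Delta_\rho u\;=\;2(\fz_{ab}-\xi_{ab}),
\]
and $B^2=1+2|\xi|^2$ forces $|\xi|\le B/\sqrt 2$, which is now bounded in terms of $\|\fz\|_\rho$. The trace is bounded through the Moncrief equation itself: $\Delta_\rho u=u-B$, where $u$ is controlled by Proposition \ref{p-sup-est-1}(ii) and $B$ by Step 1. Adding the two contributions yields $|\nabla_\rho^2 u|\le C(\|\fz\|_\rho,\rho)$. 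Finally, pick $p\in \Sigma$ with $\nabla_\rho u(p)=0$ (say a critical point of $u$, which exists by compactness); for any $q\in\Sigma$, join $p$ to $q$ by a minimizing $\rho$-geodesic $\gamma$ of length $L\le \mathbf{diam}(\rho)$, and use $\tfrac{d}{dt}|\nabla_\rho u|_\rho\le |\nabla_\rho^2 u|\cdot|\gamma'|_\rho$ to conclude $|\nabla_\rho u(q)|\le L\sup_\Sigma|\nabla_\rho^2 u|\le C(\|\fz\|_\rho,\rho)$.

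The main obstacle is the first step: Lemma \ref{l-B-grad} only provides a $g$-gradient bound, and the $g$-metric degenerates relative to $\rho$ precisely where $B$ is large, so one cannot simply compare $\rho$- and $g$-gradients. The resolution is to stay in the $g$-metric: combine the $g$-Lipschitz estimate with the $g$-diameter bound of Lemma \ref{l-geom}(iii), and close the loop by producing a $\rho$-integral bound for $B$ directly from the equation. Once $B$ is controlled, the remaining estimates are essentially bookkeeping.
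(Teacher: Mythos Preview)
Your argument is correct and follows the paper's strategy almost exactly: combine the $g$-Lipschitz bound on $\log B$ from Lemma \ref{l-B-grad} with the $g$-diameter bound of Lemma \ref{l-geom}(iii), then read off the Hessian and gradient bounds from \eqref{e-zeta-1}, \eqref{e-Lambda-1}, and a critical point of $u$.

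The one genuine difference is how you anchor $\log B$ at a small value. The paper observes that $\xi$ is (up to a factor) the Hopf differential of the harmonic identity map $(\Sigma,g)\to(\Sigma,\rho)$, hence a holomorphic quadratic differential in the conformal structure of $g$; since $g(\Sigma)>1$, such a differential must vanish somewhere, giving a point where $B=1$ exactly. You instead integrate \eqref{e-Lambda-1} to get $\int_\Sigma B\,d\mu_\rho=\int_\Sigma u\,d\mu_\rho$ and bound the mean of $B$ via Proposition \ref{p-sup-est-1}(ii), yielding $\min_\Sigma B\le 1+\tfrac{1}{\sqrt 2}\|\fz\|_\rho$. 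Both are valid; the paper's route gives the cleaner anchor $B=1$ and ties the estimate to the underlying complex-analytic structure, while your integral argument is more elementary and avoids invoking the holomorphicity of $\xi$ with respect to $g$. Either way the final bound $B\le \exp[C(1+\|\fz\|_\rho)]$ follows, and the remaining steps coincide with the paper's.
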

\begin{proof} The first part of the corollary follows from (iii) of Lemma \ref{l-geom}, Lemma \ref{l-B-grad} and the fact that there is always a point on $\Sigma$ such that $\xi=0$. This is because $\xi$ can be regraded as a holomorphic quadratic differential with respect to the conformal structure given by $g$. Hence $B=1$ at that point.

To prove the second statement, by Proposition \ref{p-sup-est-1}, we have
$$
|u|\le 1+ \frac1{\sqrt2}\|\fz\|_\rho.
$$
Hence by \eqref{e-Lambda-1} and the defintition of $B$, we have
$$
|\Delta_\rho u|\le C
$$
for some constant $C $ depending only on an upper bound of $\|\fz\|_\rho$ and $\rho$. By \eqref{e-zeta-1}, it is then easy to see that $|\nabla_\rho^2 u|\le C$ for some constant $C$ depending only on the upper bound of $\|\fz\|_\rho$ and $\rho$. Since $\nabla_\rho u=0$ somewhere, we conclude that  $|\nabla_\rho u|\le C$ for some constant $C$ depending only on the upper bound of $\|\fz\|_\rho$ and $\rho$.

\end{proof}

\section{Higher order estimates and existence}\label{sec-higher-order}

Using the second order estimate, it is rather standard to obtain higher order estimates. Namely we have the following:

\begin{prop}\label{p-u-derivatives} Let $\fz\in \mathcal{K}(\rho)$ and let $u=u(\fz)$ be the solution of \eqref{e-Lambda-1}.
Suppose $||\fz||\le \kappa$. Then for any $k\ge 2$, there is a constant $C$ depending only on $\kappa, k, \rho$  such that $|\nabla_\rho^ku|\le C$.
\end{prop}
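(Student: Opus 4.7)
The plan is to bootstrap from the $C^2$ bound in Proposition \ref{p-B-bound} using elliptic regularity. First, differentiate \eqref{e-Lambda-1} once in a local coordinate direction $\gamma$ and set $v=\p_\gamma u$. Using \eqref{e-zeta-1} and the fact that $\xi^{ab}$ is traceless with respect to $\rho$ (so that $\rho_{ab}\xi^{ab}=0$), a direct computation analogous to the one in the proof of Proposition \ref{p-sup-est-1} shows that $v$ satisfies a linear equation
\be\label{e-lin-bootstrap}
\lf(\rho^{ab}-\frac{2\xi^{ab}}{B}\ri)v_{;ab}-v=f_\gamma,
\ee
where $f_\gamma$ depends smoothly on $\rho,\nabla\rho,\fz,\nabla\fz$ and $\nabla u$, modulo bounded commutator terms between $\p_\gamma$ and covariant differentiation with respect to $\rho$. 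The coefficient tensor $\rho^{ab}-2\xi^{ab}/B$ is exactly the symbol $\wt g^{ab}$ appearing in the uniqueness proof; its eigenvalues $1\pm \sqrt 2|\xi|/\sqrt{1+2|\xi|^2}$ lie in $(0,2)$ and are bounded away from $0$ in terms of the $L^\infty$ bound on $|\xi|$, which by Proposition \ref{p-B-bound} is controlled in terms of $\kappa$ and $\rho$. Hence \eqref{e-lin-bootstrap} is uniformly elliptic with constants depending only on $\kappa,\rho$.

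The main step is to promote the pointwise $C^2$ bound on $u$ to a $C^{2,\alpha}$ bound for some $\alpha=\alpha(\kappa,\rho)\in(0,1)$. This is the only place where the two-dimensionality of $\Sigma$ enters in an essential way: a $C^2$ (equivalently, $C^{1,1}$) solution of a uniformly elliptic fully nonlinear equation $F(x,u,\nabla u,\nabla^2 u)=0$ in two dimensions admits an interior $C^{2,\alpha}$ estimate depending only on the ellipticity constants, the $C^2$ norm of the solution, and the smoothness of $F$ (the classical Morrey--Nirenberg regularity theorem in 2D). Applying this result to \eqref{e-Lambda-1} in local charts of $\Sigma$, together with the bounds from Propositions \ref{p-sup-est-1} and \ref{p-B-bound}, yields $\|u\|_{C^{2,\alpha}(\Sigma)}\le C(\kappa,\rho)$.

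Once $u\in C^{2,\alpha}$, the coefficient $\rho^{ab}-2\xi^{ab}/B$ and the right-hand side $f_\gamma$ in \eqref{e-lin-bootstrap} both lie in $C^\alpha(\Sigma)$, and standard Schauder theory gives $v=\p_\gamma u\in C^{2,\alpha}$, i.e., $u\in C^{3,\alpha}$ with a constant depending only on $\kappa,\rho$. Iterating this procedure -- differentiating \eqref{e-Lambda-1} a total of $k-2$ times and applying Schauder at each stage to the resulting linear elliptic equation satisfied by a partial derivative of order $k-2$ -- produces $u\in C^{k,\alpha}(\Sigma)$ for each $k\ge 2$, with bounds depending only on $\kappa,k,\rho$. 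This gives the required pointwise bound on $|\nabla_\rho^k u|$.

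The main obstacle is the initial upgrade from $C^2$ to $C^{2,\alpha}$: the linear equation \eqref{e-lin-bootstrap} on its own has only $L^\infty$ coefficients a priori, so without invoking the two-dimensional regularity theorem (or an equivalent tool such as Evans--Krylov under a concavity hypothesis on $F$, which is less clean here) one cannot directly apply Schauder at the first step. All subsequent iterations are routine applications of Schauder theory to linear equations with $C^\alpha$ coefficients, so the only delicate point is this first jump.
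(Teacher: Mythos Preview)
Your proposal is correct and follows essentially the same route as the paper. The paper works in local isothermal coordinates, writes \eqref{e-Lambda-1} as $F(x,y,u,p,q,r,s,t)=0$, computes $F_r,F_s,F_t$ explicitly to verify uniform ellipticity from the $C^2$ bound (your coefficient $\rho^{ab}-2\xi^{ab}/B$), then differentiates once and applies the two-dimensional Morrey--Nirenberg result (cited as \cite[Theorem~12.4]{GilbargTrudinger}) to the resulting linear equation to obtain the $C^{2,\delta}$ estimate, after which the bootstrap via Schauder is identical to yours. The only cosmetic difference is that you package the $C^2\!\to\! C^{2,\alpha}$ step as a black-box statement about fully nonlinear equations in 2D, whereas the paper carries out the differentiation explicitly before invoking the linear 2D theorem; these are the same argument.
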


Before we prove the proposition, we have the following setup:  Let $\xi=\xi(\fz)$ be the corresponding tensor in \eqref{e-zeta-1}. Then any geodesic disk of radius $r<\mathbf{inj}(\rho)$ is isometric to a geodesic ball of radius $r$ in $\H^2$.  Hence the metric $\rho$ is of the form
$$
\rho=e^{2f}(dx^2+dy^2).
$$
for $x^2+y^2<r^2$. In this geodesic ball, $\rho_{ab}=e^{2f}\delta_{ab}$ implies
\bee
\begin{split}
\Gamma_{ab}^c=&\frac12 \rho^{cd}(\rho_{ad,b}+\rho_{db,a}-\rho_{ab,d})\\
=&\frac12e^{-2f}(\rho_{ac,b}+\rho_{cb,a}-\rho_{ab,c})\\
=& f_b\delta_{ac}+f_a\delta_{bc}-f_c\delta_{ab}
\end{split}
\eee
and hence the Hessian of $u$ is given by
\bee
\begin{split}
u_{;ab}=&u_{,ab}-(f_b\delta_{ac}+f_a\delta_{bc}-f_c\delta_{ab})u_c\\
=&u_{,ab}-(f_bu_a+f_au_b-\delta_{ab}f_cu_c),
\end{split}
\eee
where $u_{;ab}$ is covariant derivative; $u_a$ and $u_{,ab}$ are first and second order partial derivatives etc., and $u_1=u_x, u_2=u_y$ etc.
Therefore
\be\label{e-nabla-u-1}
\left\{
  \begin{array}{ll}
    u_{;11}= &u_{,11}-(f_1u_1-f_2u_2) \\
    u_{;22}= &u_{,22}-(f_2u_2-f_1u_1) \\
    u_{;12}= &u_{,12}-(f_1u_2+f_2u_1).
  \end{array}
\right.
\ee
Note that in this notation,
\bee
\begin{split}
2 |\xi|^2=&2|\fz|^2-4 e^{-2f}  \xi_a^bu_{;ab}+e^{-4f}\lf(u_{;11}^2+u_{;22}^2+4u_{;12}^2-2u_{;11}u_{;22}\ri).
\end{split}
\eee
Since the trace of $\xi$ is zero, we further let $\a=\xi_1^1=-\xi_2^2$ and $\beta=\xi_1^2$. Then the equation \eqref{e-Lambda-1} can be written as
\be\label{e-Lambda-local-1}
F:=(u_{;11}+u_{;22})-e^{2f}u +\bigg[e^{4f}+X^2+Y^2\bigg]^\frac12 = 0
\ee
where
$$
X=-2e^{2f}|\fz|\a+ (u_{;11}-u_{;22}) \mbox{ and }  Y=-2(e^{2f} \beta+ u_{;12}).
$$

Following standard notations in fully nonlinear PDE theory, we let $p=u_x, q=u_y, r=u_{xx}, s=u_{xy}, t=u_{yy}$. Then the equation is of the form $$
F(x, y, u, p,q,r,s,t)=0
$$
with
\bee
\left\{
  \begin{array}{ll}
   F_r=&1+X\lf[e^{4f}+X^2+Y^2\ri]^{-\frac12}; \\
     F_t=&1-X\lf[e^{4f}+X^2+Y^2\ri]^{-\frac12}; \\
    F_s=&Y\lf[e^{4f}+X^2+Y^2\ri]^{-\frac12}
  \end{array}
\right.
\eee
The following lemma show that it is elliptic.
\begin{lma}\label{l-estimates-1} Let $u, \fz$ as in Proposition \ref{p-u-derivatives}.  Then there are positive constants $C_i$, $i=1,2$, depending only on $\kappa, \rho$ and $\|\fz\|_\rho$ such that  at $u$,
$$
0<C_1\le F_r, F_t\le 2 \mbox{ and } |F_s|\le 1,
$$
and
$$
0<C_2\lf[(v^1)^2+(v^2)^2\ri]\le F_r(v^1)^2+ F_s v^1 v^2+F_t(v^2)^2\le 3\lf[(v^1)^2+(v^2)^2\ri]
$$
for any $v^1,v^2\in \R$.
\end{lma}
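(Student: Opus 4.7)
The plan is to reduce everything to the identity $e^{4f}+X^2+Y^2 = e^{4f}B^2$, which follows from the equivalence of the local equation \eqref{e-Lambda-local-1} with \eqref{e-Lambda-1}: since $\Delta_\rho u = e^{-2f}(u_{;11}+u_{;22})$, comparing the two equations forces $[e^{4f}+X^2+Y^2]^{1/2} = e^{2f}B$. In particular, $X^2+Y^2 = e^{4f}(B^2-1) = 2e^{4f}|\xi|^2$.

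From this identity the upper bounds are immediate: $|X|,|Y| \le (X^2+Y^2)^{1/2} \le e^{2f}B$, so $X/(e^{2f}B)$ and $Y/(e^{2f}B)$ lie in $[-1,1]$, giving $F_r, F_t \in [0,2]$ and $|F_s| \le 1$. The upper bound on the quadratic form then follows from the elementary inequality $|v^1 v^2| \le \tfrac12((v^1)^2+(v^2)^2)$:
$$
F_r(v^1)^2 + F_s v^1 v^2 + F_t(v^2)^2 \le 2(v^1)^2 + \tfrac12\bigl((v^1)^2+(v^2)^2\bigr) + 2(v^2)^2 \le 3\bigl[(v^1)^2+(v^2)^2\bigr].
$$

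The heart of the argument is the strict positivity. Let $M$ denote the symmetric matrix with diagonal entries $F_r, F_t$ and off-diagonal entries $F_s/2$, so that the quadratic form in question is $v^\top M v$. A direct computation gives
$$
F_r F_t = \frac{e^{4f}B^2 - X^2}{e^{4f}B^2}, \qquad \det M = F_r F_t - \frac{F_s^2}{4} = \frac{e^{4f}B^2 - X^2 - Y^2/4}{e^{4f}B^2},
$$
and since $X^2$ and $X^2 + Y^2/4$ are both at most $X^2+Y^2 = e^{4f}(B^2-1)$, the two numerators are each at least $e^{4f}$. Hence $F_r F_t \ge 1/B^2$ and $\det M \ge 1/B^2$. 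By Proposition \ref{p-B-bound} there is a constant $C = C(\kappa, \rho)$ with $B \le C$, so these lower bounds are positive. Combined with $\tr M = F_r + F_t = 2$, the eigenvalues of $M$ are positive with sum $2$ and product $\ge 1/C^2$, so the smaller one is at least $1/(2C^2)$; the identical argument applied to $F_r, F_t$ themselves (treated as the two eigenvalues of a diagonal matrix with trace $2$ and product $\ge 1/C^2$) yields $F_r, F_t \ge 1/(2C^2)$, supplying both $C_1$ and $C_2$.

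There is no real obstacle; the substantive input is the $C^0$-bound on $B$ from Proposition \ref{p-B-bound}, after which everything is a short algebraic manipulation. The one step that deserves care is verifying the identity $e^{4f}+X^2+Y^2 = e^{4f}B^2$, which is algebraic but relies on the definitions of $X, Y$ being precisely compatible with the components of $\xi$ and with the conformal factor appearing in $\Delta_\rho u$.
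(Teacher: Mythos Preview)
Your proof is correct and rests on the same core computation as the paper's: both arguments establish the determinant bound $F_rF_t-\tfrac14 F_s^2 \ge e^{4f}/(e^{4f}+X^2+Y^2)$ and then invoke Proposition~\ref{p-B-bound}. The organizational difference is that you make the identity $e^{4f}+X^2+Y^2=e^{4f}B^2$ explicit from the outset and use only the sup bound on $B$ from Proposition~\ref{p-B-bound}, whereas the paper instead bounds $|X|$ (and implicitly $|Y|$) directly via the Hessian estimate $|\nabla_\rho^2 u|\le C$ from the same proposition. For the individual lower bounds on $F_r,F_t$, the paper argues from $|X|\le C_4$ that $|X|/[e^{4f}+X^2+Y^2]^{1/2}$ is bounded away from $1$; your trace--determinant argument ($F_r+F_t=2$, $F_rF_t\ge 1/B^2$) is cleaner and avoids having to separately control the conformal factor $e^{2f}$. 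The two routes are equivalent once the identity is in hand, but yours packages the dependence on Proposition~\ref{p-B-bound} more transparently through the single quantity $B$.
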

\begin{proof} In the following $C_i$ will denote a positive constant depending only on $\kappa$ and $\rho$.

It is easy to see that $F_r, F_t\le 2$, and $|F_s|\le 1$ which implies
$$
F_r(v^1)^2+ F_sv^1v^2+F_t(v^2)^2\le 3\lf[(v^1)^2+(v^2)^2\ri].
$$
Since $|\nabla_\rho^2u|\le C$ for some constant $C$ depending only on $\kappa, \rho$ by Proposition \ref{p-B-bound}, and there is a point where $\nabla_\rho u=0$,  we have $|\nabla_\rho u|\le C_3$. Hence $|X|\le C_4$ and so $F_r, F_s\ge C_1>0$.
On the other hand, at $u$,
$$
F_rF_t-\frac14F_s^2\ge 1-(X^2+Y^2)\lf[e^{4f}+X^2+Y^2\ri]^{-1}\ge C_4>0.
$$
Hence
\bee
\begin{split}
F_r(v^1)^2+ F_sv^1v^2&+F_t(v^2)^2 \\
=&F_r \lf((v^1)^2+ \frac{F_s}{F_r}v^1v^2+\frac{F_s^2}{4F_r^2}(v^2)^2\ri)+\frac{F_rF_t-\frac14F_s^2}{F_r}(v^2)^2\\
\ge&\frac{C_4}2(v^2)^2.
\end{split}
\eee
Similarly, one can prove that
$$
F_r(v^1)^2+2F_sv^1v^2 +F_t(v^2)^2\ge \frac{C_4}2(v^1)^2.
$$
This completes the proof of the lemma.
\end{proof}

With the ellipticity, we have
 \begin{lma}\label{l-estimates-3}
 There is $\delta>0$ depending on the quantities mentioned in the Lemma \ref{l-estimates-1} such that if $u\in C^3$, then the $C^{2,\delta}$ is bounded by a constant depending on the quantities mentioned in the  Lemma \ref{l-estimates-1}.
 \end{lma}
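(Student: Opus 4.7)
The idea is a two-stage bootstrap built on the $C^2$ bound from Proposition~\ref{p-B-bound} and the uniform ellipticity from Lemma~\ref{l-estimates-1}, landing finally on a 2D fully nonlinear Hölder theorem.

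\emph{Stage 1 ($C^{1,\delta}$ estimate).} Since $u\in C^3$, I would differentiate the equation $F(x,y,u,u_x,u_y,u_{xx},u_{xy},u_{yy})=0$ with respect to $x$. Setting $v=u_x$, this yields the linear equation
\[
F_r\, v_{xx} + F_s\, v_{xy} + F_t\, v_{yy} = -F_x - F_u u_x - F_p u_{xx} - F_q u_{xy}.
\]
By Lemma~\ref{l-estimates-1} the principal part is uniformly elliptic with constants depending only on $\kappa$, $\rho$, and $\|\fz\|_\rho$; by Proposition~\ref{p-B-bound} together with the smoothness of $F$ in all of its arguments, every coefficient and the right-hand side are uniformly bounded. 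The Krylov-Safonov Hölder estimate for linear, uniformly elliptic, non-divergence form equations with bounded measurable coefficients then delivers $v=u_x\in C^\delta$, with estimate depending only on the quantities in Lemma~\ref{l-estimates-1}; the same argument for $u_y$ gives $u\in C^{1,\delta}$.

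\emph{Stage 2 ($C^{2,\delta}$ estimate).} With $u\in C^{1,\delta}$ in hand, I would view $F(x,y,u,\nabla u,D^2 u)=0$ as a fully nonlinear uniformly elliptic equation on the geodesic disk, whose non-principal arguments $x,y,u,\nabla u$ are now Hölder continuous. At this point I would invoke the classical interior $C^{2,\delta}$ estimate of Morrey-Nirenberg for fully nonlinear uniformly elliptic equations in two spatial variables (cf.\ Gilbarg-Trudinger, Chapter~17). This furnishes a $C^{2,\delta}$ bound for $u$ depending only on the ellipticity constants, the $C^{1,\delta}$ bound from Stage~1, and the smoothness of $F$, which in turn depend only on the quantities named in Lemma~\ref{l-estimates-1}. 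Standard covering and interior-to-boundary patching across geodesic disks of radius $r<\mathbf{inj}(\rho)$ then produce the global estimate on $\Sigma$.

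\emph{Main obstacle.} The delicate step is Stage~2: the operator $F$ defined in \eqref{e-Lambda-local-1} is not manifestly concave or convex in the Hessian variables $(r,s,t)$, so the Evans-Krylov theorem is unavailable. In higher dimensions this would be a serious obstruction, but here the two-dimensionality of $\Sigma$ is essential and allows the Morrey-Nirenberg argument---which exploits that the symbol curve of a $2\times 2$ uniformly elliptic fully nonlinear operator is essentially one-dimensional---to bypass the concavity hypothesis and yield the required Hölder estimate.
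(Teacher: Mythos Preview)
Your proposal is correct, but more roundabout than the paper's proof. The paper differentiates exactly as in your Stage~1 to obtain the linear equation for $v=u_x$, but then applies \cite[Theorem 12.4]{GilbargTrudinger} --- Morrey's two-dimensional \emph{gradient} H\"older estimate for linear uniformly elliptic equations with merely bounded measurable coefficients --- directly to $v$. This yields $Dv=(u_{xx},u_{xy})\in C^\delta$ in one stroke; repeating for $u_y$ gives $u\in C^{2,\delta}$ immediately. In other words, the paper gets the $C^{2,\delta}$ bound from a single application of the 2D linear theory, whereas you first pass through Krylov--Safonov (Stage~1) to obtain $u\in C^{1,\delta}$ and then invoke a packaged 2D fully nonlinear theorem (Stage~2). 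Your Stage~1 is in fact unnecessary: the Nirenberg--Morrey result you quote in Stage~2 does not require H\"older continuity of the lower-order arguments, only the $C^2$ bound already supplied by Proposition~\ref{p-B-bound}, and its proof is precisely the paper's argument. Both routes succeed for the same reason you correctly identify --- two-dimensionality substitutes for concavity --- but the paper's single-step use of Theorem~12.4 on the linearized equation is the cleaner path.
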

\begin{proof} Let $v=u_x$.   Differentiating \eqref{e-Lambda-local-1} with respect to $x$, say, and let $v=u_x$, we see that  $v$  satisfies
 $$
 0=F_rv_{xx}+F_s v_{xy}+F_t v_{yy}+F_p u_{xx}+F_qu_{yx}+F_u u_x+F_x.
 $$
 Since $|\nabla_\rho^2 u|$ is uniformly bounded, one can apply \cite[Theorem 12.4]{GilbargTrudinger} to get the result. The estimate for $u_y$ is similar.
\end{proof}

With the $C^{2,\delta}$ bound, we can develop the higher order bounds and the Proposition \ref{p-u-derivatives} follows immediately from the following
 \begin{lma}\label{l-estimates-2}
 Suppose $u\in C^{k,\delta}$ for $k\ge 2$ and $u$ is at least $C^3$. Then $u\in C^{k+1,\delta}$ so that its $C^{k+1,\delta}$ norm is bounded by a constant depending only on $C^{k,\delta}$ norm of $u$, $ \rho$ and the upper bound of $\|\fz\|_\rho$.
 \end{lma}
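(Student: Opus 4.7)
The plan is a one-step Schauder bootstrap applied to first derivatives of $u$. In the local representation used to derive \eqref{e-Lambda-local-1}, the equation takes the form $F(x,y,u,p,q,r,s,t)=0$, with $F$ smooth in each of its arguments; the $(x,y)$-dependence enters only through the smooth conformal factor $f$ and the smooth components of $\fz$, while Lemma \ref{l-estimates-1} supplies uniform ellipticity constants at the solution (depending only on $\|\fz\|_\rho$ and $\rho$).

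Because $u$ is at least $C^3$ by hypothesis, differentiating $F=0$ in $x$ and setting $v:=u_x$ produces, as in the proof of Lemma \ref{l-estimates-3}, the linear elliptic equation
$$F_r\, v_{xx} + F_s\, v_{xy} + F_t\, v_{yy} + F_p\, v_x + F_q\, v_y + F_u\, v = -F_x,$$
where every partial derivative of $F$ is evaluated at $(x,y,u,\nabla u,\nabla^2 u)$. Since $F$ is a smooth function of its arguments and $u\in C^{k,\delta}$ gives $\nabla^2 u\in C^{k-2,\delta}$, each coefficient and the inhomogeneous term $-F_x$ lies in $C^{k-2,\delta}$ with norm bounded in terms of $\|u\|_{C^{k,\delta}}$, $\|\fz\|_\rho$ and $\rho$. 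The interior Schauder estimate for uniformly elliptic linear equations with $C^{k-2,\delta}$ coefficients (cf.~Theorem 6.17 of \cite{GilbargTrudinger}) then gives $v\in C^{k,\delta}$ with a quantitative bound of the desired form. Applying the same argument to $u_y$ and combining yields $u\in C^{k+1,\delta}$ locally, and since $\Sigma$ is compact one patches these local estimates using a fixed finite atlas of geodesic balls of radius less than $\mathbf{inj}(\rho)$ to obtain the global estimate.

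The only step requiring some care, and hence the main point to verify, is the composition bookkeeping: one must confirm that the coefficients $F_r,\ldots,F_u$ and the right-hand side $-F_x$, obtained by evaluating smooth functions at $(x,y,u,\nabla u,\nabla^2 u)$, really belong to $C^{k-2,\delta}$ with the advertised norm dependence. This is automatic because composition with a $C^{k,\delta}$ map loses at most two derivatives and $F$ is jointly smooth. Aside from this, no new analytic input beyond Lemma \ref{l-estimates-1} and classical Schauder theory is needed; in particular, iterating the lemma starting from the $C^{2,\delta}$ bound of Lemma \ref{l-estimates-3} is precisely what yields Proposition \ref{p-u-derivatives}.
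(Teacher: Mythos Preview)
Your proposal is correct and follows essentially the same route as the paper: differentiate the local equation $F=0$ once, view $v=u_x$ (and then $u_y$) as the unknown in a linear uniformly elliptic equation with $C^{k-2,\delta}$ coefficients and data, and invoke interior Schauder theory (the paper cites \cite[Cor.~6.3, Th.~6.19, Ex.~6.1]{GilbargTrudinger} rather than Th.~6.17). The only cosmetic difference is that you rewrite $F_p u_{xx}+F_q u_{yx}+F_u u_x$ as $F_p v_x+F_q v_y+F_u v$, which is the same thing.
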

 \begin{proof} Differentiating \eqref{e-Lambda-local-1} with respect to $x$, say, and let $v=u_x$ as before, we see that  $v$  satisfies
 $$
 0=F_rv_{xx}+F_s v_{xy}+F_t v_{yy}+F_p u_{xx}+F_qu_{yx}+F_u u_x+F_x.
 $$
 Suppose $u\in C^{k,\delta}$, $k\ge 2$, then $F_r, F_s, F_t$ are in $C^{k-2,\delta}$ with norms bounded by constants depending only on the quantities mentioned in the lemma. Similar for
 $$
 F_p u_{xx}+F_qu_{yx}+F_u u_x+F_x.
 $$
 By Lemma \ref{l-estimates-1} and \cite[Cor 6.3,Th. 6.19, ex. 6.1]{GilbargTrudinger}, using the fact that $|u|\le C_1$ we conclude that $v_x\in C^{k,\delta}$ with norm bounded by a constant depending only on the quantities mentioned in the lemma. From this the result follows.
\end{proof}

With Proposition \ref{p-u-derivatives}, we can now reprove the following result of \cite{Moncrief} without using Hamilton-Jacobi theory and hence avoid the use of Teichm\"uller theorem.
\begin{thm}\label{t-u}  For any $\mathfrak{z}\in\mathcal{K}(\rho)$ and $\tau\in(-\infty, 0)$, there is a unique smooth solution $u$ of
\bee
\Delta_\rho u-u+\lf(1+2\tau^2\lf|\xi\ri|^2\ri)^\frac12=0.
\eee
where
\bee
2|\tau|\xi_{ab}=2|\tau|\mathfrak{z}_{ab}-\lf(2 u_{;ab}-\rho_{ab}\Delta_\rho u\ri),
\eee
and the norm is with respect to $\rho$.
\end{thm}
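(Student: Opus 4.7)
The plan is to reduce to equation \eqref{e-Lambda-1} by rescaling and then apply the standard continuity method, with openness coming from the ellipticity of the linearization (and a maximum principle), and closedness from the a priori estimates already established in Sections \ref{sec-zero-order}--\ref{sec-higher-order}.

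\textbf{Step 1 (Reduction).} For $\tau<0$, set $\wh{\fz}=|\tau|\fz\in\mathcal K(\rho)$. Writing out the system in the statement and comparing with \eqref{e-Lambda-1}--\eqref{e-zeta-1}, one sees that $u$ solves the $\tau$-equation with data $\fz$ if and only if $u$ solves \eqref{e-Lambda-1} with data $\wh\fz$. Hence it suffices to prove existence and uniqueness of a smooth solution of \eqref{e-Lambda-1} for every $\fz\in\mathcal K(\rho)$. Uniqueness is already given by Corollary \ref{c-unique}.

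\textbf{Step 2 (Continuity method).} Fix $\fz\in\mathcal K(\rho)$ and, for $s\in[0,1]$, consider the equation $F_s(u)=0$ obtained from \eqref{e-Lambda-1} with $\fz$ replaced by $s\fz$. Let
\[
S=\{\,s\in[0,1]\ :\ F_s(u)=0\ \text{admits a smooth solution}\,\}.
\]
Then $0\in S$ since $u\equiv1$ solves $F_0(u)=0$. I will show $S$ is open and closed in $[0,1]$.

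\textbf{Step 3 (Openness).} Fix $s_0\in S$ with solution $u_0$, view $F_s$ as a map $C^{2,\alpha}(\Sigma)\to C^{0,\alpha}(\Sigma)$, and linearize at $u_0$. Exactly the computation carried out in the proof of Proposition \ref{p-sup-est-1}(i) (differentiating $\xi$ in $u$ and using that $\xi$ is $\rho$-traceless) gives
\[
D_uF_{s_0}[\phi]=\wt g^{ab}\phi_{;ab}-\phi,\qquad \wt g^{ab}=\rho^{ab}-\frac{2\xi^{ab}}{(1+2|\xi|^2)^{1/2}},
\]
and $\wt g^{ab}$ is positive definite with eigenvalues $1\pm\lambda/(1+\lambda^2)^{1/2}$, $\lambda=\sqrt 2|\xi|$. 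Thus $D_uF_{s_0}$ is a linear, uniformly elliptic operator of the form $\wt g^{ab}\partial^2-1$; by the maximum principle its kernel is trivial, so by Fredholm theory it is an isomorphism. The implicit function theorem then yields solutions for all $s$ near $s_0$.

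\textbf{Step 4 (Closedness).} Let $s_n\to s_\ast\in[0,1]$ with $s_n\in S$ and solutions $u_n$. Since $\|s_n\fz\|_\rho\le\|\fz\|_\rho$ for all $n$, Proposition \ref{p-sup-est-1}(ii) bounds $\|u_n\|_{C^0}$, Proposition \ref{p-B-bound} bounds $\|u_n\|_{C^2}$, and Proposition \ref{p-u-derivatives} bounds $\|u_n\|_{C^{k}}$ for every $k\ge 2$ — all independently of $n$. By Arzel\`a--Ascoli, a subsequence converges in $C^{k}$ for every $k$ to a smooth limit $u_\ast$, which satisfies $F_{s_\ast}(u_\ast)=0$. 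Hence $s_\ast\in S$, so $S$ is closed.

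\textbf{Main obstacle.} There is no serious obstacle: the a priori estimates have been built in the previous sections precisely so that the continuity method goes through. The one technical point worth verifying carefully is the correct form of the linearized operator and its uniform ellipticity, which follows directly from the computation already appearing in Proposition \ref{p-sup-est-1}(i).
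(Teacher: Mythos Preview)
Your argument is correct and follows the same overall strategy as the paper: a continuity method in the size of the TT-tensor, with closedness supplied by the a priori estimates of Sections \ref{sec-zero-order}--\ref{sec-higher-order} and uniqueness by Corollary \ref{c-unique}. The paper's proof is terser: it cites \cite[\S8]{Moncrief} for the short-time existence (i.e.\ solvability for $\tau$ in some interval $(\tau_0,0)$, which is exactly your openness step at $s=0$ via the implicit function theorem), and then invokes the same estimates to continue the solution to all $\tau<0$. Your version is more self-contained in that you write out the linearization $D_uF_{s_0}[\phi]=\wt g^{ab}\phi_{;ab}-\phi$ explicitly and verify its invertibility, whereas the paper outsources that to Moncrief; conversely, the paper's formulation makes transparent that the parameter is the physical one $\tau$ rather than an auxiliary $s$, though after your Step 1 reduction these are equivalent. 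One small point to tighten: the implicit function theorem gives solutions a priori only in $C^{2,\alpha}$, and Lemmas \ref{l-estimates-3}--\ref{l-estimates-2} are stated for $u\in C^3$; the passage from $C^{2,\alpha}$ to $C^\infty$ is standard (difference-quotient bootstrap for fully nonlinear elliptic equations), but it is worth a word.
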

\begin{proof} Uniqueness follows from Corollary \ref{c-unique}. To prove existence, in  section 8 of \cite{Moncrief}, it was first proved, without using the Hamilton-Jacobi theory that there is $\tau_0<0$ such that there is a solution of $\tau \in ( \tau_0, 0)$. In order to extend the solution for all $\tau<0$, it is clear that one can now use the estimates obtained in sections \ref{sec-zero-order}, \ref{sec-seond-order} and Proposition \ref{p-u-derivatives} instead of the Hamilton-Jabobi theory.
\end{proof}

\section{Applications to Teichm\"uller theory}\label{sec-Teichmuller}

Consider the Teichm\"uller space $\mathcal{T}=\mathcal{T}(\Sigma)$    of $\Sigma$ .
Let $\mathcal{M}_{-1}$ be the space of metrics with constant scalar curvature $-1$ on $\Sigma$ and let $s$ be the section
$$
s: \mathcal{T}\to  \mathcal{M}_{-1}
$$
constructed as follows: For any $[h]\in \mathcal{T} $, let the representative $h$ of the class $[h]$ be the unique metric in the class with constant scalar curvature $-1$.  Let $u: (\Sigma,h)\to(\Sigma, \rho)$ be the harmonic diffeomorphism isotopic to the identity and set $\gamma=u_*(h)$. Then $\mathbf{Id}: (\Sigma,\gamma)\to(\Sigma, \rho)$ is harmonic. Note that $\gamma$ depends only on $[h]$. Then the section $s:\mathcal{T}\to \mathcal{M}_{-1}$ is defined by $s([h])=\gamma$. We identify $\mathcal{T}$ with $s(\mathcal{T})$. (See \cite{Tromba} for detail and note that the section $s$ depends on $\rho$ and $[\rho]$ can be considered as a base point in $\mathcal{T}$.)

We are going to define a map
\be
\Psi: \mathcal{K}(\rho)\to \mathcal{T}
\ee
using this identification. The map $\Psi$ is defined as follows:   assigning $\mathfrak{z}\in\mathcal{K}(\rho)$ to the unique metric $\gamma=s([g])$ with constant scalar curvature $-1$ in the conformal class of the metric $g$ given by Lemma \ref{l-metric}. That is $\Psi(\mathfrak{z})=\gamma=e^{2\lambda}g$ where $\lambda$ is a function such that $R(\gamma)=-1$. Note that $\lambda$ can be found by solving uniquely a semi-linear equation on $\Sigma$, see Lemma \ref{l-properties-1}. This gives a well-defined map $\Psi$ from $\mathcal{K}(\rho)\to \mathcal{T}$ and one has
\begin{thm}[Moncrief]\label{t-homeomorphism}
The map $\Psi: \mathcal{K}(\rho) \to \mathcal{T}$ is a  diffeomorphism.
\end{thm}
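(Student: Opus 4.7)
My strategy is to construct a two-sided inverse to $\Psi$ by an explicit reverse procedure and then verify that both $\Psi$ and $\Psi^{-1}$ are smooth. The smoothness of $\Psi$ itself is the easier direction: by Theorem \ref{t-u} and Corollary \ref{c-unique}, the solution $u=u(\fz)$ of \eqref{e-Lambda-1} exists and is unique, and the uniform estimates in Proposition \ref{p-u-derivatives} combined with the ellipticity from Lemma \ref{l-estimates-1} allow an implicit-function-theorem argument to show that $\fz\mapsto u(\fz)$ is smooth as a map into $C^{k,\delta}(\Sigma)$. Hence $g=g(\fz)$ is smooth in $\fz$ via the algebraic formula \eqref{e-g-1}, and the conformal factor $\lambda=\lambda(\fz)$ is smooth by standard elliptic theory applied to the semilinear equation in Lemma \ref{l-properties-1}. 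Therefore $\Psi$ is smooth.

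For the inverse, I would proceed as follows. Given $\gamma\in s(\mathcal{T})$, the identity $\mathbf{Id}:(\Sigma,\gamma)\to(\Sigma,\rho)$ is harmonic by the definition of the section $s$, and its holomorphic energy density $|\p\mathbf{Id}|_\gamma^2$ is strictly positive because $\mathbf{Id}$ is an orientation-preserving diffeomorphism. Define
\[
g := 2\,|\p\mathbf{Id}|_\gamma^2\,\gamma.
\]
By conformal invariance of harmonicity in dimension two, $\mathbf{Id}:(\Sigma,g)\to(\Sigma,\rho)$ is still harmonic, and a short computation gives $|\p\mathbf{Id}|_g^2=\tfrac12$, matching the normalization in Lemma \ref{l-properties-1}. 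Set $B:=\mu_g/\mu_\rho-1$, let $u$ be the unique solution on $\Sigma$ of the linear equation $u-\Delta_\rho u=B$ (solvable because $I-\Delta_\rho$ is positive on a compact surface), define $\xi^{ab}$ by inverting \eqref{e-g-1} as $2\xi^{ab}:=B\rho^{ab}-(1+B)g^{ab}$, and finally set
\[
\fz_{ab} := \xi_{ab}+\bigl(u_{;ab}-\tfrac12\rho_{ab}\Delta_\rho u\bigr).
\]

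The symmetry and trace-freeness of $\fz$ are immediate: $u_{;ab}-\tfrac12\rho_{ab}\Delta_\rho u$ is the trace-free Hessian, and the normalization $|\p\mathbf{Id}|_g^2=\tfrac12$ is exactly what forces $\xi$ to be trace-free with respect to $\rho$ (cf.\ the proof of Lemma \ref{l-metric}). The main obstacle is to verify that $\fz$ is divergence-free; I would do this by running the chain of identities in the proof of Lemma \ref{l-harmonic} in reverse: the harmonicity of $\mathbf{Id}:(\Sigma,g)\to(\Sigma,\rho)$ supplies the identity $[(1+B)g^{ab}]_{;b}+(1+B)g^{ab}[\log(1+B)]_{,b}=0$, and combining this with $u-\Delta_\rho u=B$ and $K(\rho)=-\tfrac12$ yields $\fz^{ab}{}_{;b}=0$. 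By construction $B=u-\Delta_\rho u$, so $u$ satisfies \eqref{e-Lambda-1}, which by uniqueness gives $u=u(\fz)$, $g=g(\fz)$, and $\Psi(\fz)=[\gamma]$, proving surjectivity. Injectivity is forced step by step: for any $\fz$ with $\Psi(\fz)=[\gamma]$, the equalities $|\p\mathbf{Id}|_{g(\fz)}^2=\tfrac12$ and $g(\fz)\in[\gamma]$ dictate $g(\fz)=2|\p\mathbf{Id}|_\gamma^2\gamma$, after which $B$, $u$, $\xi$, and $\fz$ are reconstructed uniquely by the same formulas. The smoothness of $\Psi^{-1}$ then follows because every ingredient in the reverse construction depends smoothly on $\gamma$: $|\p\mathbf{Id}|_\gamma^2$ depends smoothly by elliptic regularity for the harmonic-map equation, the Helmholtz-type equation is linear, and the remaining steps are algebraic.
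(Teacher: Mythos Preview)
Your surjectivity construction is essentially the paper's own (Lemma~\ref{l-onto}): your formula $g=2|\partial\mathbf{Id}|_\gamma^2\,\gamma$ is exactly the paper's $g=e^{-2\lambda}\gamma$ with $e^{-2\lambda}=e(\gamma,\rho)+\mu_\rho/\mu_\gamma$, since the Jacobian equals $|\partial w|_\gamma^2-|\bar\partial w|_\gamma^2$; and the reversal of Lemma~\ref{l-harmonic} to get $\fz$ divergence-free is precisely what the paper does at the end of that lemma's proof.  Two small slips to fix: harmonicity gives $[(1+B)g^{ab}]_{;b}=0$ directly (your stated identity has a spurious extra term), and you should record that $B^2=1+2|\xi|^2$ follows from the determinant computation of Lemma~\ref{l-metric}, so that the linear equation $u-\Delta_\rho u=B$ really is \eqref{e-Lambda-1}.

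Where you genuinely diverge is in injectivity and in the regularity of $\Psi^{-1}$.  The paper proves injectivity (Lemma~\ref{l-oneone}) by a separate maximum-principle argument on the conformal factors, and obtains continuity of $\Psi^{-1}$ (Lemma~\ref{l-inverse}) indirectly through the energy lower bound of Lemma~\ref{l-proper}.  Your route is more economical: once the reverse construction is written down, uniqueness at each step forces injectivity, and smooth dependence at each step (the map $\gamma\mapsto|\partial\mathbf{Id}|_\gamma^2$ is algebraic in $\gamma,\rho$, the Helmholtz equation is linear, the rest is pointwise) yields smoothness of $\Psi^{-1}$ without ever touching properness.  What the paper's route buys is that Lemma~\ref{l-proper} is obtained along the way, and this has independent value: it is what drives Theorem~\ref{t-properness} and the two-sided energy bound in Remark~\ref{r-proper}.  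Note also that the paper's four lemmas assemble only into a homeomorphism; your implicit-function-theorem step (the linearization of \eqref{e-Lambda-1} is the elliptic operator with negative zeroth-order coefficient appearing in the proof of Proposition~\ref{p-sup-est-1}(i), hence invertible by the maximum principle and Fredholm theory) is what actually upgrades the conclusion to a diffeomorphism.
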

Note that the proof of bijectivity of $\Psi$ given in \cite{Moncrief} used Hamilton-Jacobi theory which need the Teichm\"uller theorem (see Corollary  \ref{c-Teichmuller} below) that $\mathcal{T}$ is homeomorphic to $\R^{6g(\Sigma)-6}$. Using results in previous sections, we will give another proof of the theorem without using the Teichm\"uller theorem. Hence as a corollary, we have:
\begin{cor}[Teichm\"uller Theorem]\label{c-Teichmuller}
The Teichm\"uller space $\mathcal{T}$ of a compact surface $\Sigma$ with genus $g(\Sigma)>1$ is  diffeomorphic  to $\R^{6g(\Sigma)-6}$.
\end{cor}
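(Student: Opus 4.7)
The plan is to derive this immediately from Theorem~\ref{t-homeomorphism}. The key observation is that the proof of Theorem~\ref{t-homeomorphism}, as outlined in the preceding sections, is built on the a priori estimates of Sections~\ref{sec-zero-order}--\ref{sec-higher-order} and does not invoke the Teichm\"uller theorem at any point, so no circularity arises in using it here.

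First I would apply Theorem~\ref{t-homeomorphism} to identify $\mathcal{T}$ with $\mathcal{K}(\rho)$ as smooth manifolds via the diffeomorphism $\Psi$. It therefore suffices to show that $\mathcal{K}(\rho)$ is diffeomorphic to $\mathbb{R}^{6g(\Sigma)-6}$. To that end, I would note that $\mathcal{K}(\rho)$ is a real vector space: the TT conditions (trace-free and divergence-free with respect to $\rho$) are linear in the tensor, so $\mathcal{K}(\rho)$ arises as the kernel of a linear differential operator acting on smooth symmetric $(0,2)$-tensors on $\Sigma$, and in particular is closed under addition and real scalar multiplication.

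Second, the identification of $\mathcal{K}(\rho)$ with the space $Q(\Sigma,[\rho])$ of holomorphic quadratic differentials on the Riemann surface $(\Sigma,[\rho])$, recorded at the start of Section~\ref{sec-Basic}, is $\mathbb{R}$-linear. Hence $\dim_{\mathbb{R}} \mathcal{K}(\rho)=2\dim_{\mathbb{C}} Q(\Sigma,[\rho])$. By the Riemann--Roch theorem applied to $K_\Sigma^{\otimes 2}$ on a compact Riemann surface of genus $g(\Sigma)>1$ (where $\deg K_\Sigma^{\otimes 2}=4g(\Sigma)-4>2g(\Sigma)-2$, so the obstruction $H^1$ vanishes), one obtains
\begin{equation*}
\dim_{\mathbb{C}} H^0(\Sigma, K_\Sigma^{\otimes 2}) = 3g(\Sigma)-3,
\end{equation*}
and consequently $\dim_{\mathbb{R}} \mathcal{K}(\rho)=6g(\Sigma)-6$.

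Finally, every finite-dimensional real vector space is diffeomorphic to a Euclidean space of the same dimension; composing such a diffeomorphism with $\Psi^{-1}$ gives the desired diffeomorphism $\mathcal{T}\cong\mathbb{R}^{6g(\Sigma)-6}$. There is no genuine obstacle in this argument: all of the analytic content has been absorbed into Theorem~\ref{t-homeomorphism}, and the dimension count relies only on Riemann--Roch, which is logically independent of the Teichm\"uller theorem.
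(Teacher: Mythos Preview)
Your proof is correct and follows essentially the same approach as the paper: invoke Theorem~\ref{t-homeomorphism} to identify $\mathcal{T}$ with $\mathcal{K}(\rho)$, then use the identification of TT-tensors with holomorphic quadratic differentials and Riemann--Roch to compute the real dimension as $6g(\Sigma)-6$. You have simply spelled out in more detail what the paper compresses into two sentences, including the explicit remark that no circularity arises since the proof of Theorem~\ref{t-homeomorphism} does not rely on the Teichm\"uller theorem.
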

\begin{proof}
Note that the space $\mathcal{K}(\rho)$ of TT-tensors with respect to $\rho$ can be identified as the space of holomorphic quadratic differentials on $\Sigma$ with complex structure given by the conformal class of $\rho$. Then Riemann-Rock Theorem and Theorem \ref{t-homeomorphism} give the required result.
\end{proof}

\subsection{$\Psi$ is injective}\label{ss-injective}

We first prove that the map $\Psi$ is one-to-one using standard maximum principle.
\begin{lma}\label{l-oneone} The map
$\Psi$ is injective.
\end{lma}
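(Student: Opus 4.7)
The plan is to show that if $\Psi(\mathfrak{z}_1)=\Psi(\mathfrak{z}_2)$ then the conformal classes of $g_1:=g(\mathfrak{z}_1)$ and $g_2:=g(\mathfrak{z}_2)$ coincide, and then to exploit the rigidity of the $\partial$-energy density established in Lemma \ref{l-properties-1} to upgrade ``same conformal class'' to ``same metric''. Once the metrics agree, reading off $B$ from the volume form and $\xi$ from \eqref{e-g-1} will pin down all data, and a short maximum-principle argument applied to the difference of the Moncrief equations will force $u_1=u_2$ and hence $\mathfrak{z}_1=\mathfrak{z}_2$.

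More precisely, I would first observe that, by construction of $\Psi$, the metrics $g_1$ and $g_2$ have the same constant scalar curvature $-1$ representative $\gamma$ in their conformal class, so $g_2=e^{2\sigma}g_1$ for some smooth function $\sigma$ on $\Sigma$. Next, by Lemma \ref{l-properties-1} the identity map $w=\mathbf{Id}\colon(\Sigma,g_i)\to(\Sigma,\rho)$ satisfies $|\partial w|_{g_i}^2=\tfrac12$ for $i=1,2$. Because the $\partial$-energy density transforms as $|\partial w|_{e^{2\sigma}g_1}^2=e^{-2\sigma}|\partial w|_{g_1}^2$ under a conformal change of the domain metric, the identity $\tfrac12=e^{-2\sigma}\cdot\tfrac12$ forces $\sigma\equiv0$. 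Hence $g_1=g_2$.

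With $g_1=g_2$ in hand, comparing volume forms via $\mu_{g_i}=(1+B_i)\mu_\rho$ from Lemma \ref{l-metric} gives $B_1=B_2$ pointwise, and then \eqref{e-g-1} immediately gives $\xi_1=\xi_2$. Subtracting the two instances of \eqref{e-Lambda-1} and using $B_1=B_2$ yields
\bee
\Delta_\rho(u_1-u_2)=u_1-u_2
\eee
on the closed surface $\Sigma$. Integrating $(u_1-u_2)$ against this equation and applying integration by parts gives
\bee
-\int_\Sigma|\nabla_\rho(u_1-u_2)|^2\,d\mu_\rho=\int_\Sigma(u_1-u_2)^2\,d\mu_\rho,
\eee
so $u_1\equiv u_2$; equivalently one can invoke the strong maximum principle for the operator $\Delta_\rho-1$ as alluded to in the statement. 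Finally, plugging $u_1=u_2$ and $\xi_1=\xi_2$ back into \eqref{e-zeta-1} yields $\mathfrak{z}_1=\mathfrak{z}_2$, proving injectivity.

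The conceptual core of the argument — and the step I regard as the main obstacle — is the second one: translating $[g_1]=[g_2]$ into $g_1=g_2$. Everything else is an algebraic unwinding of the definitions together with a routine maximum principle. The rigidity observation $|\partial w|_g^2=\tfrac12$ from Lemma \ref{l-properties-1}, combined with the well-known conformal scaling law for energy densities on a two-dimensional domain, is precisely the right tool; without it one would have two a priori different conformal representatives both carrying harmonic identity maps to $(\Sigma,\rho)$, and merely knowing the Hopf differential is unchanged would not immediately pin down the conformal factor.
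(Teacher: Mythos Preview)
Your proof is correct and the endgame (once $g_1=g_2$) is essentially the same as the paper's. The genuine difference lies in how you pass from ``$g_1$ and $g_2$ are conformal'' to ``$g_1=g_2$''. The paper first uses the last equation of Lemma \ref{l-properties-1} for $\lambda_i$, rewritten with respect to $\gamma$, to obtain $\Delta_\gamma(\lambda_1-\lambda_2)=0$; this only gives $g_1=\sigma^2 g_2$ for a \emph{constant} $\sigma$, and a further algebraic argument (tracing the formula $\rho_{ab}=\frac{B}{1+B}g_{ab}-\frac{2}{1+B}\xi_a^c g_{cb}$ and comparing volume forms) is needed to force $\sigma=1$. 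You instead invoke the first identity of Lemma \ref{l-properties-1}, $|\partial w|_{g_i}^2=\tfrac12$, together with the conformal scaling $|\partial w|^2_{e^{2\sigma}g}=e^{-2\sigma}|\partial w|^2_g$, which kills the conformal factor in one stroke without the intermediate ``constant'' step. Your route is shorter and conceptually cleaner; the paper's route is more self-contained in that it does not appeal to the scaling behaviour of the holomorphic energy density but stays entirely within the algebraic identities already developed in Section \ref{sec-Basic}.
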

\begin{proof} Let $\mathfrak{z}_1, \mathfrak{z}_2\in K(\rho)$ and let $u_1$, $u_2$ be the corresponding solutions to \eqref{e-Lambda-1}.  Then let  $\xi_1, \xi_2$  be given by \eqref{e-zeta-1}, and $g_1, g_2$ be given by \eqref{e-g-1}. Let $\gamma_i=\Psi(\mathfrak{z}_i)$ be the metrics with constant scalar curvature $-1$ so that $\gamma_i=e^{2\lambda_i}g_i$. Suppose $\gamma_1=\gamma_2=\gamma$. We want to prove that $\mathfrak{z}_1=\mathfrak{z}_2$. By Lemma \ref{l-properties-1}, for $i=1, 2$,
$$
\Delta_{g_i} \lambda_i=-\frac12\frac{\mu_\rho}{\mu_{g_i}} +\frac12e^{2\lambda_i}.
$$
Dividing the equation by $e^{2\lambda_i}$, we have
$$
\Delta_{\gamma_i}\lambda_i=-\frac12\frac{\mu_\rho}{\mu_{\gamma_i}} +\frac12.
$$
Since $\gamma_1=\gamma_2=\gamma$, we have
$$
\Delta_\gamma\lambda_i=-\frac12\frac{\mu_\rho}{\mu_{\gamma}} +\frac12
$$ and hence
\bee
\begin{split}
0=&\Delta_\gamma(\lambda_1-\lambda_2).
\end{split}
\eee
Then standard maximum principle implies that $\lambda_1=\lambda_2+C$ for some constant $C$ and hence $g_1=\sigma^2g_2$ for some constant $\sigma>0$. By Lemma \ref{l-metric},
$$
\rho_{ab}=\frac{B_i}{1+B_i}(g_i)_{ab}-2\frac1{1+B_i}(\xi_i)_a^c(g_i)_{cb}.
$$
$i=1,2$, where
$$
B_i=\lf(1+2|\xi_i|^2\ri)^\frac12.
$$
Using $g_1=\sigma^2g_2$, we have
\bee
\lf(\frac{B_1}{1+B_1}\sigma^2-\frac{B_2}{1+B_2}\ri)(g_2)_{ab}=\lf(\frac{2}{1+B_1}\sigma^2(\xi_1)_a^c- \frac{2}{1+B_2}(\xi_2)_a^c\ri) (g_2)_{cb}.
\eee
 Taking trace with respect to $g_2$ to both sides implies
 \bee
 2\lf(\frac{B_1}{1+B_1}\sigma^2-\frac{B_2}{1+B_2}\ri)=2\lf(\sigma^{2}\frac1{1+B_1}(\xi_1)_a^a - \frac1{1+B_2}(\xi_2)_a^a \ri)=0.
 \eee
 Hence by Lemma \ref{l-metric} again, we have
 \bee
 \sigma^2B_1\frac{\mu_\rho}{\mu_{g_1}}=B_2\frac{\mu_\rho}{\mu_{g_2}}
 \eee
 and so $B_1=B_2$ because $\sigma^2\mu_{g_1}^{-1}=\mu_{g_2}^{-1}.$ Putting it back to the previous equation, this implies that $\sigma=1$ and hence $g_1=g_2$ and $\xi_1=\xi_2$ by Lemma \ref{l-metric}. Finally, applying maximum principle to \eqref{e-Lambda-1}, we have $u_1=u_2$. Therefore, $\mathfrak{z}_1=\mathfrak{z}_2$ by \eqref{e-zeta-1}. This completes the proof of the injectivity of $\Psi$.
\end{proof}

\subsection{$\Psi$ is surjective}\label{ss-surjective}
Next, we show the surjectivity.
\begin{lma}\label{l-onto}
The map $\Psi$ is surjective.
\end{lma}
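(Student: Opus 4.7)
The plan is to invert $\Psi$ explicitly. Given $\gamma = s([h]) \in s(\mathcal{T})$, we have $R(\gamma) = -1$ and $w := \mathbf{Id}:(\Sigma,\gamma)\to(\Sigma,\rho)$ is a harmonic diffeomorphism, so $|\p w|_\gamma^2 > 0$ everywhere (it dominates the positive Jacobian $|\p w|_\gamma^2 - |\bar\p w|_\gamma^2$). Guided by the normalization $|\p \mathbf{Id}|_g^2 = \tfrac{1}{2}$ from Lemma \ref{l-properties-1}, I would first set $g := 2\,|\p w|_\gamma^2\,\gamma$. Conformal invariance of harmonicity of the domain in 2D keeps $\mathbf{Id}:(\Sigma,g)\to(\Sigma,\rho)$ harmonic, and a direct conformal computation confirms $|\p \mathbf{Id}|_g^2 = \tfrac{1}{2}$. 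Setting $1+B := \mu_g/\mu_\rho$, the identities $|\p \mathbf{Id}|_g^2 - |\bar\p \mathbf{Id}|_g^2 = \mu_\rho/\mu_g$ and $|\p \mathbf{Id}|_g^2 + |\bar\p \mathbf{Id}|_g^2 = \tfrac{1}{2} g^{ab}\rho_{ab}$ then give $B \geq 1$ and the algebraic relation $g^{ab}\rho_{ab} = 2B/(B+1)$.

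Next I would read \eqref{e-g-1} as a \emph{definition} of $\xi$: set $\xi^{ab} := \tfrac{1}{2}[B\rho^{ab} - (1+B)g^{ab}]$. Symmetry is clear; the identity $g^{ab}\rho_{ab} = 2B/(B+1)$ forces $\rho_{ab}\xi^{ab} = 0$ (so $\xi$ is $\rho$-trace-free), and comparing determinants in \eqref{e-g-1} with $\mu_g = (1+B)\mu_\rho$ yields $B^2 = 1 + 2|\xi|^2$. Moreover, the harmonicity of $\mathbf{Id}:(\Sigma,g)\to(\Sigma,\rho)$ is, by the computation in Lemma \ref{l-harmonic} read in reverse, equivalent to $[(1+B)g^{ab}]_{;b} = 0$, which via the definition of $\xi$ is the key divergence identity $\xi_{ab;}{}^{b} = \tfrac{1}{2} B_{;a}$. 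I would then solve the linear equation $\Delta_\rho u - u = -B$ on $\Sigma$ (uniquely solvable since $\Delta_\rho - 1$ is self-adjoint and coercive) and define $\fz_{ab} := \xi_{ab} + u_{;ab} - \tfrac{1}{2}\rho_{ab}\Delta_\rho u$. Symmetry and $\rho$-tracelessness of $\fz$ are immediate; for divergence-freeness, the Bochner-type commutation $u_{;ab;}{}^{b} = (\Delta_\rho u)_{;a} - \tfrac{1}{2} u_{;a}$ (from $\Ric(\rho) = -\tfrac{1}{2}\rho$) combined with $\xi_{ab;}{}^{b} = \tfrac{1}{2} B_{;a}$ and the equation for $u$ yield
\[
\fz_{ab;}{}^{b} = \tfrac{1}{2} B_{;a} + (\Delta_\rho u)_{;a} - \tfrac{1}{2} u_{;a} - \tfrac{1}{2}(\Delta_\rho u)_{;a} = \tfrac{1}{2}(B + \Delta_\rho u - u)_{;a} = 0,
\]
so $\fz \in \mathcal{K}(\rho)$.

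To finish I would verify $\Psi(\fz) = \gamma$: plugging $\fz$ and $u$ into \eqref{e-zeta-1} reproduces our $\xi$, and $B = (1+2|\xi|^2)^{1/2}$ by construction, so $\Delta_\rho u - u + B = 0$ is exactly \eqref{e-Lambda-1}; Corollary \ref{c-unique} identifies $u$ as \emph{the} Moncrief solution, and \eqref{e-g-1} then recovers our $g$. Since $\gamma$ is conformal to $g$ with $R(\gamma) = -1$, uniqueness of the $R \equiv -1$ representative in the conformal class gives $\Psi(\fz) = \gamma$. The hard part is the middle step: the tensor $\xi$ extracted algebraically from $g$ via \eqref{e-g-1} must \emph{automatically} be $\rho$-trace-free and satisfy $\xi_{ab;}{}^{b} = \tfrac{1}{2}B_{;a}$; both are forced by the particular conformal factor $2|\p w|_\gamma^2$ in the definition of $g$, which encodes precisely the energy-density normalization of Lemma \ref{l-properties-1}. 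Without this choice, the bridge from $\gamma$ back to the pair $(u,\fz)$ would not close.
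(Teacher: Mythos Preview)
Your proof is correct and follows essentially the same route as the paper. Your conformal factor $2|\partial w|_\gamma^2$ coincides with the paper's $e^{-2\lambda}=e(\gamma,\rho)+\mu_\rho/\mu_\gamma$ (since $e(\gamma,\rho)=|\partial w|_\gamma^2+|\bar\partial w|_\gamma^2$ and $\mu_\rho/\mu_\gamma=|\partial w|_\gamma^2-|\bar\partial w|_\gamma^2$), so your $g$, $\xi$, $u$, and $\fz$ are exactly the objects the paper constructs; the only difference is packaging---you invert \eqref{e-g-1} directly and read Lemma~\ref{l-harmonic} in reverse for the divergence identity, whereas the paper routes the same computation through the Hopf differential $\hat k$ and the auxiliary tensor $k=\hat k+\tfrac12 g$.
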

\begin{proof} By the identification of $\mathcal{T}$ as $s(\mathcal{T})$, we need to show that for any $\gamma\in \mathcal{M}_{-1}$ so that $\mathbf{Id}: (\Sigma,\gamma)\to (\Sigma,\rho)$ is harmonic, we can find a TT-tensor $\mathfrak{z}\in\mathcal{K}(\rho)$ such that $\Psi(\mathfrak{z})=\gamma$.

As in the proof of Lemma \ref{l-properties-1}, one sees that in a holomorphic coordinates such that  $\gamma_{ab}=e^{2\beta}|dz|^2$,
\bee
\rho=\phi dz^2+\ol\phi d\ol{z}^2 +e(\gamma,\rho)\gamma
\eee
where
$$
\phi=u+\sqrt{-1}v = \frac14\lf( \rho_{11}- \rho_{22}\ri) -\frac12\ii\rho_{12}
$$
is holomorphic, and $e(\gamma,\rho)$ is the energy density with respect to the metrics $\gamma$ and $\rho$. Let
$$
\hat k=-\frac{1}{2} \lf[ \phi dz^2+\ol\phi d\ol{z}^2 \ri].
$$
Then the holomorphicity of $\phi$ implies that $\hat k$  is a $TT$-tensor with respect to $\gamma$. And the metric $\rho$ can be expressed as
\bee
\rho=-2\hat k+ e(\gamma,\rho)\gamma.
\eee
Let $\lambda$ be a function defined by
$$
e^{-2\lambda}=e(\gamma,\rho)+\frac{\mu_\rho}{\mu_\gamma}.
$$
Since
\bee
\begin{split}
e(\gamma,\rho)^2-
2|\hat k|^2_{\gamma}=&e^{-4\beta}\lf[\frac14(\rho_{11}+\rho_{22})^2-2( 2u^2+2v^2)\ri]\\
=& e^{-4\beta}\lf[\frac14\lf(\rho_{11}+\rho_{22}\ri)^2-4\lf(\frac14\rho_{11}-\frac14\rho_{22}\ri)^2-\rho_{12}^2\ri]\\
=&e^{-4\beta}\lf(\rho_{11}\rho_{22}-\rho_{12}^2\ri)\\
=&\frac{\mu_\rho^2}{\mu_\gamma^2}>0,
\end{split}
\eee
$\lambda$ satisfies
\be\label{e-surjective-1}
e^{-4\lambda}-2e^{-2\lambda}e(\gamma,\rho)+2|\hat k|_\gamma^2=0.
\ee

Let $g=e^{-2\lambda}\gamma$, and let $k=\hat k+\frac12 g$. Then
$$
|k|^2_g=|\hat k|^2_g+\frac12 =e^{4\lambda}|\hat k|_\gamma^2+\frac12 =e^{2\lambda}e(\gamma,\rho).
$$
Putting it in the expression of $\rho$ in terms of $\hat k$ and $\gamma$, we have
\be\label{e-rho}
\rho=-2\hat k+ |k|_g^2g.
\ee
Now, one can define $\xi_{a}^b$ by
$$
\hat k_{ab}=\frac{\mu_\rho}{\mu_g}\xi_a^cg_{cb}
$$
or more explicitly,
$$
\xi_a^c=\frac{\mu_g}{\mu_\rho}g^{bc}\hat k_{ab}.
$$
As before, we define $\xi_{ab}=\rho_{ac}\xi_b^c$ using metric $\rho$. Then it is clear from the definition that $\xi$ is trace free with respect to $\rho$ and can be expressed in terms of $\hat k$ and $g$ as follows:
\bee
\begin{split}
\xi_{ab}=&\rho_{ac}\xi_b^c\\
=&\frac{\mu_g}{\mu_\rho}g^{cd}\hat k_{bd}\lf(|k|_g^2g_{ac}-2\hat k_{ac}\ri)\\
=&\frac{\mu_g}{\mu_\rho}\lf(|k|_g^2\hat k_{ab}-2g^{cd}\hat k_{bd}\hat k_{ac}\ri).
\end{split}
\eee
Hence $\xi_{ab}$ is symmetric in $a,b$. Let
$$
B=\lf(1+2|\xi|^2\ri)^\frac12,
$$
where
\bee
|\xi|^2=\xi_a^b\xi_b^a
=\frac{\mu_g^2}{\mu_\rho^2}g^{bc}\hat k_{ca}g^{ad}\hat k_{bd}
=\frac{\mu_g^2}{\mu_\rho^2}|\hat k|_g^2.
\eee
On the other hand, by \eqref{e-rho} we have
$$
\frac{\mu_\rho^2}{\mu_g^2}=\lf(\frac12-|\hat k|_g^2\ri)^2.
$$
So
\bee
\begin{split}
\frac{\mu_\rho}{\mu_g}B=&\lf(\frac{\mu_\rho^2}{\mu_g^2}+2|\hat k|_g^2\ri)^\frac12\\
=&\lf[\lf(\frac12-|\hat k|_g^2\ri)^2+2|\hat k|_g^2\ri]^\frac12\\
=&\frac12+|\hat k|_g^2\\
=&|k|_g^2.
\end{split}
\eee
Hence
\bee
\frac{\mu_\rho^2}{\mu_g^2}+2|\hat k|_g^2  = |k|_g^4.
\eee
That is
\bee
(1-\frac1{B^2})|k|_g^4-2|k|_g^2+1=0.
\eee
Hence
$ |k|_g^2=\frac{B}{B+1}$ or $\frac{B}{B-1}$. Note that $\hat k$ must be zero somewhere, and hence $\xi=0$ and $B=1$ somewhere. Hence we must have $ |k|_g^2=\frac{B}{B+1}$. This implies
 \be\label{e-rho-g}
 \left\{
   \begin{array}{rll}
    (1+B)g^{al}& =  & -2\rho^{ab}\xi_b^l+B\rho^{al}; \\
     \mu_g &=&(1+B)\mu_\rho .
   \end{array}
 \right.
 \ee
Next, let $u$ be the solution of
$$
\Delta_\rho u-u+(1+2|\xi|^2)^\frac12=0,
$$
which existence and uniqueness are ensured by standard theory of elliptic PDE,
and define $\mathfrak{z}_{ab}$ by
\bee
2\xi_{ab}=2\mathfrak{z}_{ab}-(2u_{;ab}-\rho_{ab}\Delta_\rho u).
\eee
We claim that $\mathfrak{z}_{ab}\in \mathcal{K}(\rho)$. Then one can see that $\Psi(\mathfrak{z})=\gamma$ and $\Psi$ is surjective.

To prove the claim, it is easy to see that $\mathfrak{z}_{ab}$ is symmetric. Taking trace with respect to $\rho$ in the defining formula for $\mathfrak{z}_{ab}$, we have
\bee
2\rho^{ab}\mathfrak{z}_{ab}=2\rho^{ab}\xi_{ab}
=2\xi_a^a
=0.
\eee
So $\mathfrak{z}$ is traceless. From the proof of Lemma \ref{l-harmonic}, using the assumption that $\mathbf{Id}:(\Sigma, \gamma) \to (\Sigma, \rho)$ is harmonic, we have
$$
0=-2\rho^{ac}\mathfrak{z}_{c;b}^b
$$
in a normal coordinate neighborhood of $\rho$. This proves the claim and completed the proof of the lemma.
\end{proof}

\subsection{$\Psi$ is continuous}\label{ss-continuous}

 Next we show the continuity of $\Psi$ which is a consequence of the $C^{k,\delta}$ estimates of all $k\ge 0$.
\begin{lma}\label{l-continuous} The map $\Psi$ is continuous.
\end{lma}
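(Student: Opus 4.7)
The plan is to bootstrap from the zeroth-order continuous dependence in Proposition \ref{p-sup-est-1}(i) to $C^\infty$ continuous dependence, using the higher-order a priori estimates already established, and then pass smoothly to the conformal representative $\gamma = e^{2\lambda}g$. Fix $\mathfrak{z} \in \mathcal{K}(\rho)$ and take $\mathfrak{z}_n \to \mathfrak{z}$ in $\mathcal{K}(\rho)$ (which is finite dimensional, so the choice of norm is irrelevant). Let $u_n = u(\mathfrak{z}_n)$, $u = u(\mathfrak{z})$, $\xi_n = \xi(\mathfrak{z}_n)$, $g_n = g(\mathfrak{z}_n)$, $B_n = B(\mathfrak{z}_n)$, and similarly without subscript for $\mathfrak{z}$.

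First, Proposition \ref{p-sup-est-1}(i) gives $\|u_n - u\|_{C^0} \le \frac{1}{\sqrt{2}}\|\mathfrak{z}_n-\mathfrak{z}\|_\rho \to 0$. Since $\{\|\mathfrak{z}_n\|_\rho\}$ is bounded, Proposition \ref{p-B-bound} and Proposition \ref{p-u-derivatives} give uniform bounds $\|u_n\|_{C^{k,\delta}} \le C_k$ for every $k \ge 0$, with $C_k$ independent of $n$. By Arzel\`a–Ascoli, every subsequence of $\{u_n\}$ has a further subsequence converging in $C^k$ for each $k$; the $C^0$-limit is forced to be $u$ by uniqueness (Corollary \ref{c-unique}), and a standard subsequence-of-subsequence argument then upgrades the full sequence $u_n \to u$ in $C^\infty$. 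From the defining formulas \eqref{e-zeta-1}, \eqref{e-B-def} and \eqref{e-g-1}, this yields $\xi_n \to \xi$, $B_n \to B$, and $g_n \to g$ in $C^\infty$.

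It remains to show $\lambda_n \to \lambda$ in $C^\infty$, where $\lambda_n$ is the unique solution of
\begin{equation*}
\Delta_{g_n} \lambda_n = -\tfrac{1}{2(B_n+1)} + \tfrac{1}{2} e^{2\lambda_n}
\end{equation*}
from Lemma \ref{l-properties-1}, and similarly for $\lambda$. Uniqueness of $\lambda_n$ follows from the strict monotonicity in $\lambda$ (the maximum principle applied to the difference of two solutions). For uniform bounds, the maximum principle applied at the extrema of $\lambda_n$ gives $\frac{1}{B_n+1} \le e^{2\lambda_n} \le \frac{1}{B_n+1}$ evaluated at the extremum — more precisely, an $L^\infty$ bound on $\lambda_n$ in terms of $\sup B_n$ and $\inf B_n$, both of which are controlled uniformly in $n$. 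Standard elliptic theory (Schauder and $L^p$ estimates) applied to the linearized equation, combined with bootstrapping against the smooth convergence of $g_n$ and $B_n$, then yields $\|\lambda_n\|_{C^{k,\delta}} \le C_k$ for every $k$; subsequence extraction plus uniqueness gives $\lambda_n \to \lambda$ in $C^\infty$. Therefore $\gamma_n = e^{2\lambda_n} g_n \to e^{2\lambda} g = \gamma$ in $C^\infty$, which is the topology on $s(\mathcal{T}) \subset \mathcal{M}_{-1}$, so $\Psi(\mathfrak{z}_n) \to \Psi(\mathfrak{z})$.

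The main obstacle is the final semi-linear step: one must show continuous dependence of the conformal factor on the underlying metric and curvature term. While this is classical (the prescribed-scalar-curvature-minus-one equation on a surface of Euler characteristic $<0$ is uniquely solvable and behaves well under smooth perturbation), it requires some care to ensure the constants in the elliptic estimates do not degenerate as $n \to \infty$. The ingredients are however all in place, since $g_n \to g$ smoothly and $B_n+1 \ge 2$ uniformly, so the nonlinearity $\tfrac12 e^{2\lambda}$ is strictly monotone and the forcing term is smoothly convergent.
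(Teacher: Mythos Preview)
Your proof is correct and follows essentially the same approach as the paper: uniform higher-order bounds from Propositions \ref{p-B-bound} and \ref{p-u-derivatives}, combined with the $C^0$ identification of the limit via Proposition \ref{p-sup-est-1}(i) and uniqueness, force $u_n \to u$ in $C^\infty$, hence $g_n \to g$, hence $\gamma_n \to \gamma$. The paper simply cites Lemma \ref{l-properties-1} for the last step, whereas you spell out the maximum-principle bounds and elliptic bootstrapping for $\lambda_n$ explicitly; this extra care is welcome but not a different argument.
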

\begin{proof} Let $\mathfrak{z}_n\in \mathcal{K}(\rho)$ be a sequence such that $\mathfrak{z}_n\to \mathfrak{z}\in \mathcal{K}(\rho)$. Let $u_n, u, g_n, g, B_n, B$ be the corresponding quantities as in \eqref{e-Lambda-1} and \eqref{e-g-1}, and $\gamma_n=e^{2\lambda_n}g_n$ with $R(\gamma_n)=-1$. In particular, convergence of $\mathfrak{z}_n$ implies that there is a $\kappa>0$ such that $|\mathfrak{z}_n|\le \kappa$ for all $n$. By Proposition \ref{p-B-bound},   $|\nabla_\rho^2 u_n|\le C_1$ for some $C_1>0$ independent of $n$. Hence $u_n$ will subconverge to $u$ in $C^\infty$ norm by the Proposition \ref{p-sup-est-1} and Proposition \ref{p-u-derivatives}.   This implies that $u_n\to u$ in $C^\infty$ norm. Hence $g_n\to g$ in $C^\infty$ norm. Then by Lemma \ref{l-properties-1}, we have $\Psi(\mathfrak{z}_n)=\gamma_n$ converges in $C^\infty$ norm to $\gamma=e^{2\lambda}g$ with $R(\gamma)=-1$. This completes the proof of the lemma.
\end{proof}

\subsection{$\Psi^{-1}$ is continuous}\label{ss-continuous-inverse}

The finally step of the proof of the main theorem in this section is to prove that $\Psi^{-1}$ is continuous. We need the following lower bound estimate for the total energy of the (harmonic) Identity map of $\Sigma$ with respect to corresponding metrics.

\begin{lma}\label{l-proper} There are positive constants $C_1, C_2$ such that for any $\mathfrak{z}\in \mathcal{K}(\rho)$,
$$
E(\fz)\ge C_1\|\mathfrak{z}\|_\rho-C_2.
$$
where $E(\mathfrak{z})$ is the total energy of the identity map from $(\Sigma, \Psi(\mathfrak{z}))$ to $(\Sigma, \rho)$
\end{lma}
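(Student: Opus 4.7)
The plan is to reduce the desired bound to a pointwise lower bound on $u$. Since $\Psi(\fz)=\gamma=e^{2\lambda}g$ is conformal to $g$ and the Dirichlet energy is a conformal invariant in dimension two, $E(\fz)$ coincides with the total energy of $\mathbf{Id}:(\Sigma,g)\to(\Sigma,\rho)$, which by Corollary \ref{c-area}(ii) equals $\int_\Sigma u\,d\mu_\rho$. Hence it suffices to establish $\int_\Sigma u(\fz)\,d\mu_\rho\ge C_1\|\fz\|_\rho-C_2$.

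The key qualitative step is to show that for every $\hat\fz\in\mathcal{K}(\rho)$ with $\|\hat\fz\|_\rho=1$ one has $\hat u:=u(\hat\fz)>1$ strictly. Set $v=\hat u-1\ge 0$, where the nonnegativity is Proposition \ref{p-sup-est-1}(ii). From \eqref{e-Lambda-1} and $B\ge 1$ we compute $\Delta_\rho v-v=1-B\le 0$. Since the zeroth-order coefficient of $\Delta_\rho-I$ is strictly negative, the strong maximum principle forces either $v>0$ on $\Sigma$ or $v\equiv 0$. In the second case $\hat u\equiv 1$ makes all derivatives of $\hat u$ vanish, so \eqref{e-zeta-1} yields $\xi=\hat\fz$, while \eqref{e-Lambda-1} forces $B\equiv 1$ and hence $|\xi|\equiv 0$, contradicting $\|\hat\fz\|_\rho=1$. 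Thus $\hat u>1$ everywhere, and by compactness of $\Sigma$, $\min_\Sigma\hat u>1$.

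To upgrade this to a uniform gap, note that Proposition \ref{p-sup-est-1}(i) makes $\hat\fz\mapsto u(\hat\fz)$ Lipschitz from $\mathcal{K}(\rho)$ into $C^0(\Sigma)$, so the map $\hat\fz\mapsto\min_\Sigma u(\hat\fz)$ is continuous. As $\mathcal{K}(\rho)$ is finite-dimensional, the unit sphere $\{\|\hat\fz\|_\rho=1\}$ is compact, and the previous step gives a strict inequality at each of its points, so there exists $\epsilon_0>0$ with $\min_\Sigma u(\hat\fz)\ge 1+\epsilon_0$ uniformly. For $\fz$ with $a:=\|\fz\|_\rho\ge 1$, writing $\fz=a\hat\fz$ and applying Proposition \ref{p-sup-est-1}(iii) gives the pointwise bound
\[
\frac{u(\fz)}{a}\ge u(\hat\fz)-\left(1-\frac1a\right)\ge\epsilon_0+\frac1a\ge\epsilon_0,
\]
so $u(\fz)\ge\epsilon_0\|\fz\|_\rho$ on $\Sigma$. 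Integrating yields $E(\fz)\ge\epsilon_0 A(\rho)\|\fz\|_\rho$ when $\|\fz\|_\rho\ge 1$, while the region $\|\fz\|_\rho<1$ is absorbed into an additive constant using the trivial bound $E(\fz)\ge A(\rho)$ from $u\ge 1$. Taking $C_1=\epsilon_0 A(\rho)$ and $C_2$ large enough gives the claim.

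The main obstacle is the strict inequality $\min_\Sigma\hat u>1$, which requires carefully excluding the degenerate configuration $\hat u\equiv 1$ using the explicit relation \eqref{e-zeta-1} between $\xi$, $\hat\fz$, and the derivatives of $\hat u$. Once that uniform gap is in hand, the homogeneity estimate (iii) of Proposition \ref{p-sup-est-1} does all the work of converting it into a linear lower bound on $E(\fz)$, and the rest is routine continuity and compactness.
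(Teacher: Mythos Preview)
Your argument is correct and takes a genuinely different route from the paper. The paper argues by contradiction: assuming $E(\fz_n)/\|\fz_n\|_\rho\to 0$ along a sequence with $a_n=\|\fz_n\|_\rho\to\infty$, it uses Proposition~\ref{p-sup-est-1}(iv) to show $v_n=u_n/a_n\to 0$ uniformly, then squares the equation for $v_n$, integrates, and applies a Bochner-type identity to force $\int_\Sigma|\fz_n/a_n|^2\,d\mu_\rho\to 0$, contradicting $\|\fz_n/a_n\|_\rho=1$. Your proof is direct: the strong maximum principle applied to $v=u(\hat\fz)-1$ yields the strict pointwise gap $u(\hat\fz)>1$ on the unit sphere of $\mathcal{K}(\rho)$, compactness upgrades this to a uniform gap $1+\epsilon_0$, and the scaling estimate Proposition~\ref{p-sup-est-1}(iii) then produces the pointwise bound $u(\fz)\ge\epsilon_0\|\fz\|_\rho$ for $\|\fz\|_\rho\ge 1$. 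Your approach is more elementary in that it avoids the integral identities, uses part~(iii) rather than the stronger part~(iv), and in fact yields the sharper conclusion that $u(\fz)$ itself grows linearly in $\|\fz\|_\rho$ pointwise, not merely after integration. The paper's approach, on the other hand, exhibits more explicitly the analytic mechanism (the $L^2$ energy balance) behind the growth.
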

\begin{proof} Suppose on the contrary that this is not true. Then there exists a sequence $\fz_n$, with $a_n=\|\fz_n\|_\rho\to\infty$, such that $E(\fz_n)/a_n\to 0$. Let $u_n$ be the solutions to \eqref{e-Lambda-1} corresponding to $\fz_n$. Since $\mathcal{K}(\rho)$ is a finite dimensional inner product space, we may assume that $\fz_n/a_n\to \fz$ for some $\fz$ with $\|\fz\|_\rho=1$.  By Theorem \ref{t-u} and part (iv) of Proposition \ref{p-sup-est-1}, $v_n=u_n/a_n$ is a Cauchy sequence with respect to the supnorm. Let $v=\lim_{n\to\infty}u_n/a_n$. Since
$$
E(\fz_n)=\int_\Sigma u_n d\rho
$$
and $E(\fz_n)/a_n\to 0$, we conclude that $v\equiv0$.

Note that each $v_n$ satisfies:
\bee
\Delta_\rho v_n-v_n+\frac1{a_n}\lf(1+2|\xi_n|^2\ri)^\frac12=0
\eee
where $\xi_n$ as in \eqref{e-zeta-1} corresponding to $\fz_n$. Hence
\bee
\int_\Sigma\frac1{a_n}\lf(1+2|\xi_n|^2\ri)^\frac12 d\mu_\rho=\int_\Sigma v_n \mu_\rho\to0.
\eee
Multiplying the equation by $v_n$ and integrating by parts, we also have
\bee
\int_\Sigma\lf(|\nabla_\rho v_n|^2+v_n^2\ri)d\mu_\rho=\int_\Sigma\frac1{a_n}\lf(1+2|\xi_n|^2\ri)^\frac12 d\mu_\rho\to0.
\eee
On the other hand, we have
$$
\int_\Sigma(\Delta_\rho v_n-v_n)^2 d\mu_\rho=\frac1{a_n^2}\int_\Sigma\lf(1+2|\xi_n|^2\ri)d\mu_\rho
$$
obtained by simply integrating the square of both sides of the equation of $v_n$.
Now in an orthonormal frame of $\rho$, and write $\xi$ for $\xi_n$ etc, we have
\bee
\begin{split}
|\xi|^2=&|\fz|^2-\fz_{ab}(2u_{;ab}-\rho_{ab}\Delta_\rho u)+\frac14(2u_{;ab}-\rho_{ab}\Delta_\rho u)^2\\
=&|\fz|^2-2\fz_{ab}u_{ab}+|\nabla_\rho^2 u|^2-\frac12(\Delta_\rho u)^2
\end{split}
\eee
because $\fz$ is trace free with respect to $\rho$.
Hence we have
\begin{eqnarray*}
\lefteqn{\int_\Sigma\lf[(\Delta v_n)^2+2|\nabla v_n|^2 +v_n^2\ri]d\mu_\rho} \\
=& \frac{1}{a_n^2}A(\rho) +\int_\Sigma\lf(2\lf|\frac{\fz_n}{a_n}\ri|^2 +2|\nabla_\rho^2v_n|^2-(\Delta_\rho v_n)^2\ri)d\mu_\rho.
\end{eqnarray*}
Using the fact that Gaussian curvature of $\rho$ is $-\frac12$,
\bee
\begin{split}
\int_\Sigma 2|\nabla_\rho^2 v_n|^2d\mu_\rho=&-\int_\Sigma 2(v_n)_{a;ba}(v_n)_b d\mu_\rho\\
=&-\int_\Sigma \lf[ 2(v_n)_{a;ab}(v_n)_b-|\nabla_\rho v_n|^2\ri] d\mu_\rho\\
=&\int_\Sigma 2(\Delta_\rho v_n)^2 d\mu_\rho-\int_\Sigma |\nabla_\rho v_n|^2 d\mu_\rho
\end{split}
\eee
Hence we have
\bee
\int_\Sigma\lf(|\nabla_\rho v_n|^2+v_n^2\ri)d\mu_\rho=\frac{1}{a_n^2}A(\rho)+2\int_\Sigma \lf|\frac{\fz_n}{a_n}\ri|^2d\mu_\rho.
\eee
We conclude that
$$
\int_\Sigma \lf|\frac{\fz_n}{a_n}\ri|^2d\mu_\rho\to0
$$
as $n\to\infty$. This implies that $\fz=0$ which is impossible because $\|\fz\|_\rho=1$.
\end{proof}

\begin{remark}\label{r-proper} The lemma also follows from \cite[p.239--240]{Moncrief}. On the other hand, combining Corollary \ref{c-area} and the above lemma, we have
$$
C_1||\fz||-C_2\le E(\fz)\le C_2||\fz||+C_4
$$
for some positive constants $C_1,\dots,C_4$. This improves the estimates in \cite[p.240]{Moncrief}.

\end{remark}

\begin{lma}\label{l-inverse} $\Psi^{-1}$ is continuous.
\end{lma}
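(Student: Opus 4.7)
The plan is to establish continuity of $\Psi^{-1}$ by a standard subsequence argument whose engine is the coercivity estimate of Lemma \ref{l-proper}. Suppose $\gamma_n=\Psi(\fz_n)$ converges to $\gamma=\Psi(\fz)$ in $\mathcal{T}$ (understood, via the section $s$, as $C^\infty$ convergence of the constant scalar curvature $-1$ representatives). I aim to show $\fz_n\to\fz$ in $\mathcal{K}(\rho)$.

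First I would extract a uniform bound on $\|\fz_n\|_\rho$. The total energy $E(\gamma_n)$ of $\mathbf{Id}:(\Sigma,\gamma_n)\to(\Sigma,\rho)$ can be written as $\frac12\int_\Sigma \mathrm{tr}_{\gamma_n}\rho\,d\mu_{\gamma_n}$, a smooth functional of the metric, so $C^\infty$ convergence $\gamma_n\to\gamma$ implies $E(\fz_n)=E(\gamma_n)\to E(\gamma)<\infty$. In particular $\{E(\fz_n)\}$ is bounded, and Lemma \ref{l-proper} gives
\bee
\|\fz_n\|_\rho\le\frac{E(\fz_n)+C_2}{C_1}\le C
\eee
for some constant $C$ independent of $n$.

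Next, since $\mathcal{K}(\rho)$ is a finite-dimensional inner product space (isomorphic to the space of holomorphic quadratic differentials on $(\Sigma,[\rho])$), the bounded sequence $\{\fz_n\}$ admits a subsequence $\fz_{n_k}\to\fz^\ast\in\mathcal{K}(\rho)$. Continuity of $\Psi$ (Lemma \ref{l-continuous}) yields $\Psi(\fz^\ast)=\lim_k\Psi(\fz_{n_k})=\gamma=\Psi(\fz)$, and injectivity (Lemma \ref{l-oneone}) forces $\fz^\ast=\fz$. Hence every subsequence of $\{\fz_n\}$ has a further subsequence converging to $\fz$, which means the full sequence satisfies $\fz_n\to\fz$. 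This establishes continuity of $\Psi^{-1}$ and, together with the preceding lemmas, completes the proof of Theorem \ref{t-homeomorphism}.

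The only substantive point to verify is the continuous dependence of $E(\gamma)$ on $\gamma$ under the topology being used on $s(\mathcal{T})$; once this is granted, the properness statement of Lemma \ref{l-proper} does all of the real work, converting a purely topological closeness ($\gamma_n\to\gamma$) into a quantitative bound on $\|\fz_n\|_\rho$ that finite-dimensionality then upgrades to subsequential convergence. No further estimates on the Moncrief equation are required beyond those already assembled.
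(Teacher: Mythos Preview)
Your proof is correct and follows essentially the same approach as the paper: use convergence of the energies together with Lemma \ref{l-proper} to get a uniform bound on $\|\fz_n\|_\rho$, then pass to a convergent subsequence in the finite-dimensional space $\mathcal{K}(\rho)$ and use continuity and injectivity of $\Psi$ to identify the limit. Your write-up is slightly more explicit (spelling out the ``every subsequence has a further subsequence'' argument and the continuous dependence of $E$ on $\gamma$), but there is no substantive difference from the paper's argument.
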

\begin{proof} Let $\mathfrak{z}_n\in \mathcal{K}(\rho)$ and $\gamma_n=\Phi(\mathfrak{z}_n)$ such that $\gamma_n\to \gamma$ in $C^\infty$ topology. In particular, the total energy of the identity map from $(\Sigma,\gamma_n)$ to $(\Sigma,\rho)$ is bounded above by a constant $C_1$ independent of $n$. By Lemma \ref{l-proper}, we have $\|\mathfrak{z}_n\|_\rho\le C_2$ for some constant $C_2$ independent of $n$. For any subsequence of $\mathfrak{z}_n$ we can find a subsequence which converges to some $\mathfrak{z}$. By the continuity of $\Psi$, we have $\Phi(\mathfrak{z})=\gamma$. By the fact that $\Psi$ is injective, we conclude that $\mathfrak{z}_n\to \Psi^{-1}(\gamma)$. This completes the proof of the lemma.

\end{proof}

Finally, we can prove the main theorem of this section:

\begin{proof}[Proof of Theorem \ref{t-homeomorphism}]
The theorem followings from Lemmas  \ref{l-oneone}, \ref{l-onto}, \ref{l-continuous}, and \ref{l-inverse}.  Here we have used the fact that if $\fz_i, \fz\in \mathcal{K}(\rho)$ so that $\fz_i\to \fz$ in $C^0$ norm, then $\fz_i\to \fz$ in $C^\infty$ norm, because $\fz_i, \fz$ can be expressed in terms of holomorphic functions in local coordinates.
\end{proof}

We would like to mention another application. Recall that one can define the energy of   $[h]\in \mathcal{T}$ as follows: Let $\gamma=s([h])$ so that $\mathbf{Id}: (\Sigma,\gamma)\to (\Sigma,\rho)$ is harmonic. Then the energy $E([h])$ of $[h]$ is defined as the total energy of the above map, see \cite{Tromba}. The following is \cite[Theorem 3.2.4]{Tromba}.
\begin{thm}\label{t-properness}
$E$ is a proper function.
\end{thm}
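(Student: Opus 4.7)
The plan is to pull the energy function back to $\mathcal{K}(\rho)$ through the homeomorphism $\Psi$ of Theorem \ref{t-homeomorphism} and then invoke the coercive lower bound of Lemma \ref{l-proper}. Concretely, I would show that $E$ is proper by arguing that, for any sequence $[h_n]\in\mathcal{T}$ leaving every compact set, the energy $E([h_n])$ must tend to infinity.

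First I would identify the function on $\mathcal{K}(\rho)$ that corresponds to $E$ under $\Psi$. By definition, $E([h])$ is the total energy of $\mathbf{Id}:(\Sigma,\gamma)\to(\Sigma,\rho)$ with $\gamma=s([h])$; writing $\mathfrak{z}=\Psi^{-1}([h])$, we have $\gamma=\Psi(\mathfrak{z})$, so $E([h])$ is precisely the quantity $E(\mathfrak{z})$ appearing in Lemma \ref{l-proper}. Thus the pullback $E\circ\Psi$ coincides with the map $\mathfrak{z}\mapsto E(\mathfrak{z})$ already studied in Section \ref{sec-Teichmuller}.

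Now suppose $[h_n]\in\mathcal{T}$ escapes every compact subset of $\mathcal{T}$, and set $\mathfrak{z}_n=\Psi^{-1}([h_n])$. Since $\Psi$ is a homeomorphism by Theorem \ref{t-homeomorphism} (Lemmas \ref{l-oneone}, \ref{l-onto}, \ref{l-continuous}, \ref{l-inverse}), the sequence $\mathfrak{z}_n$ escapes every compact subset of $\mathcal{K}(\rho)$. Because $\mathcal{K}(\rho)$ is a finite-dimensional normed vector space (being identified with the space of holomorphic quadratic differentials on $(\Sigma,[\rho])$), this forces $\|\mathfrak{z}_n\|_\rho\to\infty$. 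Applying Lemma \ref{l-proper} then gives
\begin{equation*}
E([h_n]) \;=\; E(\mathfrak{z}_n) \;\ge\; C_1\|\mathfrak{z}_n\|_\rho - C_2 \;\longrightarrow\; \infty,
\end{equation*}
which is exactly properness of $E$.

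I do not expect significant obstacles here: all the nontrivial work has already been carried out. The only conceptual point that deserves a sentence of justification is the identification of $E\circ\Psi$ with the $E(\mathfrak{z})$ of Lemma \ref{l-proper}, which is immediate from the definitions. The coercive estimate of Lemma \ref{l-proper} is what does the real work, and I would emphasize in the write-up that this is where Proposition \ref{p-sup-est-1}(iv) and the Bochner-type computation underlying that lemma are essential.
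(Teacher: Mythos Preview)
Your proposal is correct and is precisely the argument the paper has in mind: the paper's proof consists of the single sentence that the theorem follows from Lemma \ref{l-proper} and Theorem \ref{t-homeomorphism}, and you have simply spelled out how those two ingredients combine. The identification $E([h])=E(\Psi^{-1}([h]))$ and the use of finite-dimensionality of $\mathcal{K}(\rho)$ to pass from ``escapes compacts'' to $\|\mathfrak{z}_n\|_\rho\to\infty$ are exactly the details the paper leaves implicit.
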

\begin{proof}
 The theorem is   a consequence of   Lemma \ref{l-proper} and Theorem \ref{t-homeomorphism}.
 \end{proof}

\end{document}